\titleformat{\section}{\vskip10pt\normalsize\bfseries}{\thesection.}{0.5em}{\centering}
\titleformat{\subsection}{\vskip10pt\normalsize\bfseries}{\thesubsection.}{0.5em}{}
\numberwithin{equation}{section}
\newtheorem{theorem}{Theorem}[section]
\newtheorem{lemma}[theorem]{Lemma}
\theoremstyle{definition}
\theoremstyle{remark}
\newtheorem{remark}{\bf Remark}
\numberwithin{remark}{section}
\numberwithin{equation}{section}
\renewcommand{\d}{{\mathrm{d}}}
\newcommand{\R}{{\mathbb{R}}}
\newcommand{\Vn}{{\nu}}
\newcommand{\Vu}{{u}}
\newcommand{\CF}{{\mathcal F}}
\newcommand{\CT}{{\mathcal T}}
\newcommand{\lb}{\llbracket}
\newcommand{\rb}{\rrbracket}
\newcommand{\Lb}{\{\hspace{-4.0pt}\{}
\newcommand{\Rb}{\}\hspace{-4.0pt}\}}
\begin{document}

\title[Navier--Stokes equations with variable density]{A convergent post-processed discontinuous Galerkin method 
for incompressible flow with variable density}

\author{Buyang Li\,\,}
\address{Department of Applied Mathematics, The Hong Kong Polytechnic University, Hung Hom, Hong Kong.} 
\email{buyang.li@polyu.edu.hk}

\author{\,\,Weifeng Qiu}
\address{Department of Mathematics, City University of Hong Kong,
83 Tat Chee Avenue, Kowloon, Hong Kong.}
\email{weifeqiu@cityu.edu.hk}

\author{Zongze Yang\,\,}
\address{Department of Applied Mathematics, The Hong Kong Polytechnic University, Hung Hom, Hong Kong.} 
\email{zongze.yang@polyu.edu.hk}

\begin{abstract}
We propose a linearized semi-implicit and decoupled finite element method for the incompressible Navier--Stokes 
equations with variable density.  Our method is fully discrete and shown to be unconditionally stable. 
The velocity equation is solved by an $H^1$-conforming finite element method, and an upwind discontinuous Galerkin finite element method with post-processed velocity is adopted for the density equation. The proposed method is proved to be convergent in approximating reasonably smooth solutions in three-dimensional convex polyhedral domains. 
\end{abstract}

\subjclass[2000]{65N30, 65L12}

\maketitle 

\section{Introduction}

In this article we consider numerical approximation to incompressible flow with variable density, described by 
the following hyperbolic-parabolic system of partial differential equations (PDEs): 
\begin{subequations}
\label{DNS_eqs}
\begin{align}
\label{DNS_eq1}
\partial_{t} \rho + \nabla\cdot (\rho u) = 0
&&\mbox{in}\,\,\,\Omega\times(0,T], \\
\label{DNS_eq2}
\rho \partial_{t} u + \rho  ( u \cdot \nabla) u + \nabla p - \mu  \Delta  u = 0
&&\mbox{in}\,\,\,\Omega\times(0,T], \\ 
\label{DNS_eq3}
\nabla\cdot u = 0
&&\mbox{in}\,\,\,\Omega\times(0,T],
\end{align}
\end{subequations}
in a convex polyhedral domain $\Omega \subset \R^{d}$, with $d\in\{2,3\}$, up to a given time $T$, with the following boundary and initial conditions: 
\begin{subequations}
\label{DNS_IBC}
\begin{align}
\label{DNS_IBC_c1}
& u = 0  && \text{on } \partial\Omega \times [0, T],\\
\label{DNS_IBC_c2}
& \rho = \rho^{0} \text{ and } u = u^{0} 
&&\mbox{in}\,\,\,\Omega\,\,\, \text{ at } t = 0 .
\end{align}
\end{subequations}
In this model, $\rho:\Omega\rightarrow\R$, $u:\Omega\rightarrow\R^d$ and $p:\Omega\rightarrow\R$ are the density, velocity and pressure of the fluid, respectively, and $\mu > 0$ is the viscosity constant of the fluid. The initial value of the density is assumed to satisfy the following physical condition: 
\begin{align} 
\label{positive_initial_density}
\rho_{\rm min}
:= \min_{x \in \Omega} \rho^{0}(x) > 0 . 
\end{align} 
For smooth initial values satisfying the positivity condition \eqref{positive_initial_density}, existence and uniqueness of smooth solutions of \eqref{DNS_eqs} in two dimensions were proved in \cite{Danchin2006,LadyzhenskayaSolonnikov1978,Torres-Silva-Medar-2009}. Hence, this problem does not generate shock waves in finite time (at least in 2D). Existence and uniqueness of smooth solutions in three dimensions remains open similarly as the Navier--Stokes equations with constant density. 

Numerical approximation to the coupled system \eqref{DNS_eqs} were studied with many different numerical methods, including projection methods \cite{Almgren-Bell-Colella-Howell-Welcome-1998,Bell-Marcus-1992,Guermond-Quartapelle-2000,Li-Mei-Ge-Shi-2013,Pyo-Shen-2007}, fractional-step methods \cite{Guermond-Salgado-2008,Guermond-Salgado-2009}, backward differentiation formulae  \cite{Li-Li-Mei-Li-2015}, and the discontinuous Galerkin (DG) method \cite{Liu-Walkington-2007}. The stability of several numerical methods was proved in \cite{Guermond-Quartapelle-2000,Pyo-Shen-2007,Li-Mei-Ge-Shi-2013}. Convergence of a DG method and a staggered non-conforming finite element method were proved based on compactness arguments in \cite{Liu-Walkington-2007} and \cite{Latche-Saleh-2020}, respectively, without explicit convergence rates. 

Since the variable density introduces considerable difficulties to error analysis of the coupled nonlinear system, as mentioned in \cite{Pyo-Shen-2007}, error analysis has been done only in a few articles. The main difficulty is to prove boundedness of numerical solutions to both $\rho$ and $u$, as well as a positive lower bound of the numerical solution to $\rho$, uniformly with respect to the temporal stepsize and spatial mesh size. An error estimate for the single velocity equations \eqref{DNS_eq2} was presented in \cite{Guermond-Salgado-2011} for the methods proposed in \cite{Guermond-Salgado-2008,Guermond-Salgado-2009}, where the numerical solutions $\rho_h^n$, $n=1,\dots,N$, of the density equation were assumed to have positive upper and lower bound uniformly with respect to the temporal stepsize and spatial mesh size; see \cite[Conjectures in Remark 4.2]{Guermond-Salgado-2011}. An error estimate for a fractional-step temporally semidiscrete method was presented in \cite{An-2020-JSC} under the assumption that the numerical solution of density has positive upper and lower bounds uniformly with respect to the temporal stepsize.
The first complete error estimate of fully discrete FEM for the coupled system \eqref{DNS_eqs} was presented in 
\cite{Cai-Li-Li-2020} for the two-dimensional problem based on $H^3$ regularity assumption on the solution. 
The analysis in \cite{Cai-Li-Li-2020} utilizes an error splitting approach, which involves  analyzing the error of full discretization based on uniform regularity estimates for the temporally semidiscrete solutions. 
However, the analysis in \cite{Cai-Li-Li-2020} cannot be directly extended to three dimensions due to the presence of $H^1$-conforming finite element solution of $u$ in the density equation, which requires proving $W^{1,\infty}$-boundedness of the numerical solution to $u$ in order to obtain an error estimate for the density equation. This limits the analysis in \cite{Cai-Li-Li-2020} to two dimensions and solutions with $H^3$ regularity. 
Hence, error estimates for the three-dimensional problem based only on $H^{2+\alpha}$ spatial regularity of solutions (more realistic in general convex polyhedra) still remain open.

The objective of this article is to introduce a fully discrete, linearized semi-implicit, decoupled and unconditionally stable FEM for the coupled system \eqref{DNS_eqs}--\eqref{DNS_IBC} such that error analysis can be done in three dimensions under more realistic $H^{2+\alpha}$ regularity assumptions on the solution in a convex polyhedron.  
To this end, we propose an upwind DG method for the density equation with post-processed velocity, and $H^1$-conforming FEM for the velocity equation. The key to error analysis in three dimensions is the post-processing of velocity, which projects the $H^1$-conforming finite element solution of $u$ to the divergence-free subspace of the Raviart--Thomas element space. This post-processing has a significant influence on the error analysis: it allows us to derive an error estimate without proving the $W^{1,\infty}$-boundedness of the numerical solution to $u$. 

In Section \ref{sec:main_results}, we present the main results of this paper, including the numerical method and error estimate. 
The proof of the main theorem is presented in Section \ref{sec:proof}.

\section{Main results}\label{sec:main_results}

\subsection{Notation}

Let $\Omega$ be convex polygon/polyhedron in $\R^d$,  and denote by $\nu$ the outward unit normal vector 
on the boundary $\partial\Omega$.  We define the following function spaces on $\Omega$:
\begin{align}
&H^1(\Omega) :=\{v\in L^2(\Omega): \nabla v \in L^2(\Omega)^d\} ,\\
&\mathring H^1(\Omega) :=\{v\in H^1(\Omega): v=0\,\,\,\mbox{on}\,\,\,\partial\Omega \} ,\\
&\widetilde L^2(\Omega) :=\{v\in L^2(\Omega): \mbox{$\int_\Omega$} v \, \d x=0 \} ,\\
&H(\text{div}, \Omega) :=\{v\in L^2(\Omega)^d:\nabla\cdot v\in L^2(\Omega)\} .
\end{align}

For any nonnegative integer $r$, we denote by ${\rm P}^r_{\rm dG}(\CT_{h})$ the scalar-valued discontinuous 
Galerkin finite element space of degree up to $r$, built on a quasi-uniform partition $\CT_{h}$ of $\Omega$ into 
tetrahedra (with $\CT_{h}$ denoting the set of tetrahedra, and $h$ denoting the mesh size). 
The outward unit normal vector on the boundary $\partial K$ of a tetrahedron $K\in\CT_{h}$ is denoted by $\nu_K$. 

We define ${\rm RT}^1(\CT_{h})$ to be the $H(\text{div}, \Omega)$-conforming Raviart--Thomas 
finite element spaces of order $1$, i.e., 
$${\rm RT}^1(\CT_{h}) := \{w \in H(\text{div}, \Omega): w|_{K} \in P_{1}(K)^{d} + xP_{1}(K), 
\forall K \in \CT_{h} \} .
$$ 
We also define the following finite element spaces: 
\begin{align}
&{\rm P}^1(\CT_{h}) := {\rm P}^1_{\rm dG}(\CT_{h})\cap H^1(\Omega) , 
\\
&{\rm P}^{\rm 1b}(\CT_{h}) := {\rm P}^1(\CT_{h})\,\,\mbox{enriched by a bubble function (cf. \cite{Arnold-Brezzi-Fortin-1984} and \cite[Section 7.1]{Boffi-2008})}, 
\\
&\mathring{\rm P}^{\rm 1b}(\CT_{h}) := {\rm P}^{\rm 1b}(\CT_{h})\,\,\mbox{with with zero boundary condition}, 
\label{FEM_space_velocity} \\ 
&\widetilde{\rm P}^1(\CT_{h}) 
:= \{ v\in {\rm P}^1(\CT_{h}) : \mbox{$\int_\Omega v \d x=0$} \} , 
\label{FEM_space_pressure} \\
&{\rm RT}^1_0(\CT_{h})
:=\{v_h\in {\rm RT}^1(\CT_{h}):
\mbox{$\nabla\cdot v_h = 0$ in $\Omega$ and $v_h\cdot \nu=0$ on $\partial\Omega$}\} .
\label{FEM_space_post_processed_velocity}
\end{align}
We denote by $P_h^{\rm RT}:L^2(\Omega)^d \rightarrow {\rm RT}^1_0(\CT_{h})$ the $L^{2}$-orthogonal projection, defined by 
\begin{align}\label{L2-Proj-RT}
\,\,\,(v-P_h^{\rm RT}v,w_h)=0
\quad\forall\, w_h\in {\rm RT}^1_0(\CT_{h}),\,\,\,\forall\,v\in L^2(\Omega)^d . 
\end{align}
Similarly, we denote by $P_h^{\rm dG}:L^2(\Omega)\rightarrow {\rm P}^2_{\rm dG}(\CT_{h})$ 
the $L^{2}$-orthogonal projection defined by 
\begin{align}\label{L2-Proj-dG}
\begin{aligned}
&(v-P_h^{\rm dG}v,w_h)=0
&&\forall\, w_h\in {\rm P}^2_{\rm dG}(\CT_{h}),\,\,\,\forall\,v\in L^2(\Omega) .
\end{aligned}
\end{align}
The finite element space $\mathring{\rm P}^{\rm 1b}(\CT_{h})^d \times \widetilde{\rm P}^1(\CT_{h})$ satisfies the inf-sup condition (cf. \cite{Arnold-Brezzi-Fortin-1984,Boffi-2008}) 
\begin{align}
\|q_h\|_{L^2(\Omega)}\le C\sup_{\begin{subarray}{ll}
v_h\in \mathring{\rm P}^{\rm 1b}(\CT_{h})\\
v_h\neq 0
\end{subarray}}\frac{|(\nabla\cdot v_h,q_h)|}{\|v_h\|_{H^1(\Omega)}},\quad
\forall~q_h\in \widetilde{\rm P}^1(\CT_{h}) ,
\end{align}
and therefore is stable in approximating the Stokes and Navier--Stokes equations. 
This inf-sup condition is required in practical computation for the numerical method to be stable, but is not used in our error analysis. 

We denote by 
$$
(u,v)
=\sum_{K\in\CT_h} \int_K uv \d x
\quad
\langle u,v \rangle_{\partial K_s}
= \int_{\partial K_s} uv \,\d s
$$
the inner product of $L^2(\Omega)$ and $L^2(\partial K_s)$, respectively, where $\partial K_s$ is a subset of $\partial K$ for a tetrahedron $K\in\CT_h$. 
For a function $v$ uniformly continuous on each tetrahedron $K\in\CT_h$, we define 
\begin{align}\label{average-jump}
\Lb v \Rb
=\frac12(v^+ + v^-) 
\quad\mbox{and}\quad
\lb v \rb
=(v^- - v^+)\nu_K
\end{align} 
to be the average and jump of the function $v$ defined on the boundary $\partial K$ for $K\in\CT_h$, with $v^+$ and $v^-$ denoting the exterior and interior traces of the function. If $F=K\cap K'$ is a common face of two tetrahedra $K$ and $K'$, then the jump $\lb v\rb$ on $F$ is independent of the definitions using $K$ and $K'$. 

To guarantee the positivity of the numerical solution of density $\rho$, we denote by $\chi \in W^{1,\infty}(\mathbb{R})$ the cut-off function defined by 
$$
{\chi}(s) 
= \left\{
\begin{array}{cl}
\dfrac{1}{2}\rho_{\rm min} & \displaystyle\mbox{if}~~s <\frac{1}{2}\rho_{\rm min},  \\[10pt]
s &\displaystyle\mbox{if}~~ \frac{1}{2}\rho_{\rm min} \le s\le \frac{3}{2}\rho_{\rm max} ,\\[10pt]
\dfrac{3}{2}\rho_{\rm max}  &\displaystyle \mbox{if}~~s>\frac{3}{2}\rho_{\rm max}, 
\end{array}\right.
$$
where
\begin{align}\label{def-rho-min-max}
\rho_{\rm min}
:= \min_{x \in \Omega} \rho^{0}(x)
\quad\mbox{and}\quad
\rho_{\rm max}
:= \max_{x \in \Omega} \rho^{0}(x) . 
\end{align}
The cut-off function defined above has the following conditions: 
\begin{subequations}
\label{chi_properties}
\begin{align}
\label{chi_property1}
&\chi(s) = s  
&&\hspace{-50pt} \forall s \in \Big[\frac{1}{2}\rho_{\rm min} \, , \, 
\frac{3}{2}\rho_{\rm max} \Big] ,\\
\label{chi_property2}
&\dfrac{1}{2} \rho_{\rm min} \le \chi (s) \le \dfrac{3}{2} 
\rho_{\rm max}  
&& \hspace{-50pt} \forall s \in \mathbb{R} .
\end{align}
\end{subequations}

\subsection{The numerical method and its convergence}

Let $t_n=n\tau$, $n=0,1,\dots,N$, be a uniform partition of the time interval $[0,T]$ with stepsize $\tau=T/N$. 
For a given function $u_{h}^{n-1}$ at time $t=t_{n-1}$, we denote by $\partial K_{-}^{n}$ ($ \partial K_{+}^{n}$) 
the numerical inflow (outflow)  boundary of the tetrahedron $K\in\CT_h$ at time $t=t_n$, defined by 
$$
\partial K_{-}^{n} := \{x \in \partial K :  ( P_h^{\rm RT} u_{h}^{n-1} \cdot \nu_{K} ) (x)< 0\}, \quad 
 \partial K_{+}^{n} := \{x \in \partial K :  ( P_h^{\rm RT} u_{h}^{n-1} \cdot \nu_{K} ) (x)> 0\}.
$$ 
We consider the following fully discrete linearized FEM for \eqref{DNS_eqs}--\eqref{DNS_IBC} (based on a reformulation of the system as shown in \cite[(1.5)--(1.7)]{Cai-Li-Li-2020}): for given 
$(\rho_{h}^{n-1},u_{h}^{n-1}) \in {\rm P}^2_{\rm dG}(\CT_{h}) \times \mathring{\rm P}^{\rm 1b}(\CT_{h})^d $, 
find $(\rho_{h}^{n}, u_{h}^{n}, p_{h}^{n}) 
\in {\rm P}^2_{\rm dG}(\CT_{h}) \times \mathring{\rm P}^{\rm 1b}(\CT_{h})^d \times \widetilde{\rm P}^1(\CT_{h})$ satisfying the equations 
\begin{subequations}
\label{DNS_FEM_eqs}
\begin{align}
\label{DNS_FEM_eq1} 
&(D_{\tau} \rho_{h}^{n}, \varphi_{h}) + ( (P_h^{\rm RT} u_{h}^{n-1} 
\cdot \nabla) \rho_{h}^{n}, \varphi_{h} ) 
- \sum_{K \in \CT_{h}} \langle  P_h^{\rm RT} u_{h}^{n-1}  \cdot 
\lb \rho_{h}^{n} \rb, \varphi_{h} \rangle_{\partial K_{-}^{n}} = 0, \\[5pt]
\label{DNS_FEM_eq2} 
&(\chi (\rho_{h}^{n-1}) D_{\tau}u_{h}^{n},  v_{h}) 
+ \frac{1}{2} (  D_{\tau} \chi ( \rho_{h}^{n}) \,u_{h}^{n},  v_{h}) 
- \frac{1}{2}( \chi (\rho_{h}^{n})u_{h}^{n-1}, \nabla (u_{h}^{n} \cdot  v_{h})) \\ 
\nonumber 
&\qquad\qquad\qquad + (\chi (\rho_{h}^{n} )  (u_{h}^{n-1}\cdot \nabla) u_{h}^{n},  v_{h}) 
+ (\mu \nabla u_{h}^{n}, \nabla  v_{h}) - (p_{h}^{n}, \nabla\cdot  v_{h}) = 0,\\[5pt]
\label{DNS_FEM_eq3}
&(\nabla\cdot u_{h}^{n}, q_{h}) = 0,
\end{align}
\end{subequations}
for all test functions $(\varphi_{h},  v_{h}, q_{h}) \in {\rm P}^2_{\rm dG}(\CT_{h}) \times \mathring{\rm P}^{\rm 1b}(\CT_{h})^{d} \times \widetilde{\rm P}^1(\CT_{h})$, 
where 
$$
D_{\tau}\rho_{h}^{n} = \frac{\rho_{h}^{n} - \rho_{h}^{n-1}}{\tau},\quad
D_{\tau} \chi ( \rho_{h}^{n}):= \frac{\chi (\rho_{h}^{n}) - \chi (\rho_{h}^{n-1})}{\tau}
\quad\mbox{and}\quad
D_{\tau} u_{h}^{n} = \frac{u_{h}^{n} - u_{h}^{n-1}}{\tau} 
$$ 
are the backward Euler difference quotients of corresponding functions. 
The initial values of the numerical solutions are simply chosen to be 
$$
\rho_h^0 = P_h^{\rm dG}\rho^0
\quad\mbox{and}\quad
u_h^0 = I_hu^0 , 
$$
where $I_{h}: \mathring 
C(\overline\Omega)^d\rightarrow \mathring {\rm P}^{1b}(\CT_{h})^d$ is the globally continuous nodal interpolation operator.

The proposed method \eqref{DNS_FEM_eqs} has unconditional energy stability, i.e., substituting $\varphi_h=\rho_h^n$ and $v_h=u_h^n$ into \eqref{DNS_FEM_eqs}, and using the relation
\begin{align}\label{rho-test-identity}
& \sum_{K\in\CT_h} (P_h^{\rm RT} u_{h}^{n-1}  , \nabla \frac12 |\rho_h^n|^2)_{K}
- \sum_{K \in \CT_{h}} \langle  P_h^{\rm RT} u_{h}^{n-1}  \cdot 
\lb \rho_{h}^{n} \rb, \rho_h^n \rangle_{\partial K_{-}^{n}} \notag \\
&= \sum_{K\in\CT_h} \langle P_h^{\rm RT} u_{h}^{n-1}\cdot\nu_K  , \frac12 |\rho_h^n|^2  \rangle_{\partial K}
- \sum_{K \in \CT_{h}} \langle  P_h^{\rm RT} u_{h}^{n-1}  \cdot 
\lb \rho_{h}^{n} \rb, \rho_h^n \rangle_{\partial K_{-}^{n}} \notag \\
&= \sum_{K\in\CT_h} \langle P_h^{\rm RT} u_{h}^{n-1}\cdot\nu_K  , \frac12 |\rho_h^n|^2 \rangle_{\partial K_+}
+ \sum_{K\in\CT_h} \langle  P_h^{\rm RT} u_{h}^{n-1}\cdot\nu_K  , \frac12 |\rho_h^n|^2  \rangle_{\partial K_-} \notag \\
&\quad\, 
- \sum_{K \in \CT_{h}} \langle  P_h^{\rm RT} u_{h}^{n-1}  \cdot \nu_K 
[(\rho_{h}^{n})_--(\rho_{h}^{n})_+], (\rho_{h}^{n})_- \rangle_{\partial K_{-}^{n}} \notag \\
&=
- \sum_{K \in \CT_{h}} \langle  P_h^{\rm RT} u_{h}^{n-1}  \cdot \nu_K , \, 
\frac12 [(\rho_{h}^{n})_--(\rho_{h}^{n})_+]^2 \rangle_{\partial K_{-}^{n}} \notag \\
&\ge 0 ,
\end{align}
one can obtain the following energy inequality:
\begin{align}\label{energy-inequality}
&\frac12 \|\rho_h^n\|_{L^2(\Omega)}^2  
+ \int_\Omega \frac12 \chi (\rho_{h}^{n}) |u_{h}^{n}|^2 \d x
+ \tau \mu\|\nabla u_h^n\|_{L^2(\Omega)}^2 \notag \\ 
&\le 
\frac12 \|\rho_h^{n-1}\|_{L^2(\Omega)}^2
+ \int_\Omega \frac12 \chi (\rho_{h}^{n-1}) |u_{h}^{n-1}|^2 \d x .
\end{align}
Since \eqref{DNS_FEM_eqs} is a linearly implicit method, the energy inequality above implies existence and uniqueness of numerical solutions without any condition on the time stepsize or spatial mesh size (setting $\rho_h^{n-1}=0$ and $u_h^{n-1}=0$ in \eqref{energy-inequality} yields that the homogeneous linear system associated to \eqref{DNS_FEM_eqs} has only zero solution). 

In this article, we prove convergence of the numerical method \eqref{DNS_FEM_eqs} under the following regularity assumption on the exact solution: for some $\alpha\in(0,\frac12)$
\begin{equation}
\label{sol_reg}
\begin{aligned}
& \rho \in C([0,T]; H^{2+\alpha}(\Omega)),  
&&\partial_{t} \rho \in C([0,T]; H^{1}(\Omega)),  &&\partial_{t}^{2} \rho \in C([0,T]; L^{2}(\Omega)), \\
& \Vu \in C([0,T]; H^{2}(\Omega)),
&&\partial_{t} \Vu \in C([0,T]; H^{2}(\Omega)), 
&&\partial_{t}^{2} \Vu \in C([0,T]; L^{2}(\Omega)) ,\\
& p \in C([0,T]; H^{1}(\Omega)) ,  && \partial_t p \in C([0,T]; H^{1}(\Omega)) . 
\end{aligned}
\end{equation}
The spatial regularity in \eqref{sol_reg} is only slightly more than $H^2$, which is weaker and more reasonable than the regularity assumptions in \cite{Cai-Li-Li-2020} (which requires $H^3$ regularity of the solution) for this problem in a convex polygon or polyhedron. 
For the simplicity of notation, we denote by 
$$
u^n=u(\cdot,t_n) 
\quad\mbox{and}\quad 
\rho^n=\rho(\cdot,t_n)
$$ 
the exact solutions $u$ and $\rho$ at time level $t=t_n$. 

The main theoretical result of this article is the following theorem.

\begin{theorem}
\label{thm_proj_error} 
Under the regularity assumption {\rm\eqref{sol_reg}} and stepsize restriction $\tau = o(h^{d/2})$, there exists a positive constant $h_{*}$ such that when $h \le h_{*}$ the fully discrete solutions given by {\rm(\ref{DNS_FEM_eqs})} satisfy the following error estimate: 
\begin{align*}
 \max_{1\le n\le N} \left( \Vert  u^{n} - u_{h}^{n} \Vert_{L^{2}(\Omega)} 
+ \Vert \rho^{n} - \rho_{h}^{n} \Vert_{L^{2}(\Omega)} \right) 
+ \bigg(\sum_{n=1}^{N} \tau \Vert  u^{n} - u_{h}^{n} \Vert_{H^{1}(\Omega)}^{2}\bigg)^{\frac12}
\le C \big( \tau + h^{\frac{3}{2} + \alpha} \big) , 
\end{align*} 
where constant $C$ may depend on the exact solution $(\rho, u, p)$ and $T$. 
\end{theorem}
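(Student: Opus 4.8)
The plan is to combine an error splitting with an induction on the time level that a posteriori removes an a priori smallness assumption on the numerical solution. I would first fix comparison operators: keep the $L^2$-orthogonal DG projection $P_h^{\rm dG}$ for the density, introduce a Stokes-type projection $R_h u^n\in\mathring{\rm P}^{\rm 1b}(\CT_h)^d$ together with a compatible pressure projection $\pi_h p^n\in\widetilde{\rm P}^1(\CT_h)$, and split $\rho^n-\rho_h^n=\theta_\rho^n+e_\rho^n$ with $\theta_\rho^n=\rho^n-P_h^{\rm dG}\rho^n$, and $u^n-u_h^n=\theta_u^n+e_u^n$ with $\theta_u^n=u^n-R_h u^n$. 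Under \eqref{sol_reg}, standard approximation theory gives $\|\theta_\rho^n\|_{L^2(\Omega)}+h\|\nabla\theta_\rho^n\|_{L^2(\Omega)}\le Ch^{2+\alpha}$ and, via the discrete trace inequality, the crucial face estimate $\big(\sum_F\|\lb\theta_\rho^n\rb\|_{L^2(F)}^2\big)^{1/2}\le Ch^{3/2+\alpha}$, together with $\|\theta_u^n\|_{L^2(\Omega)}+h\|\theta_u^n\|_{H^1(\Omega)}\le Ch^2$, $\|p^n-\pi_h p^n\|_{L^2(\Omega)}\le Ch$, and the time-increment residuals $\|D_\tau\phi^n-\partial_t\phi^n\|_{L^2(\Omega)}\le C\tau$, $\|\phi^n-\phi^{n-1}\|_{L^2(\Omega)}\le C\tau$ for the exact solution components $\phi$, coming from the second-order time derivatives in \eqref{sol_reg}. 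Subtracting \eqref{DNS_FEM_eqs} from the weak formulation of the reformulated PDE at $t=t_n$, tested against discrete functions, then produces error equations for $(e_\rho^n,e_u^n,e_p^n)$ whose right-hand sides collect (i) $O(\tau)$ time-discretization residuals, including the lag of $u^{n-1}$ against $u^n$ in the density convection, (ii) projection residuals, and (iii) genuinely nonlinear coupling terms.

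The heart of the argument is a pair of energy estimates run in tandem. For the density I take $\varphi_h=e_\rho^n$: because the advecting field $P_h^{\rm RT} u_h^{n-1}$ lies in ${\rm RT}^1_0(\CT_h)$, hence is exactly divergence-free with vanishing normal trace, the advective terms can be manipulated exactly as in \eqref{rho-test-identity}, so that the $e_\rho^n$-against-$e_\rho^n$ contribution is a nonnegative upwind-jump dissipation; combined with $(D_\tau e_\rho^n,e_\rho^n)\ge\frac1{2\tau}(\|e_\rho^n\|_{L^2(\Omega)}^2-\|e_\rho^{n-1}\|_{L^2(\Omega)}^2)$ this isolates the residual and coupling terms, and -- this is the point of the post-processing -- none of it requires $W^{1,\infty}$-control of $u_h^{n-1}$. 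For the velocity I take $v_h=e_u^n$ and $q_h=e_p^n$: the skew-symmetrized convection together with the $\frac12 D_\tau\chi(\rho_h^n)$ term reproduces the algebra behind \eqref{energy-inequality}, so that, provided the cut-off is inactive (i.e. $\chi(\rho_h^m)=\rho_h^m$ for the relevant $m$), the left-hand side equals $D_\tau\big(\frac12\int_\Omega\chi(\rho_h^n)|e_u^n|^2\,\d x\big)$ plus a nonnegative term plus $\mu\|\nabla e_u^n\|_{L^2(\Omega)}^2$, with coercivity of the mass term guaranteed by $\chi(\rho_h^{n-1})\ge\frac12\rho_{\rm min}$ from \eqref{chi_property2}.

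To close everything I would argue by induction on $n$. Assuming the asserted estimate holds at levels $1,\dots,n-1$, the inverse inequality together with $\tau=o(h^{d/2})$ yields $\|\rho_h^m-\rho^m\|_{L^\infty(\Omega)}\le Ch^{-d/2}(\tau+h^{3/2+\alpha})=o(1)$ for $m\le n-1$; since the exact density is transported by the divergence-free, tangential field $u$ and therefore stays in $[\rho_{\rm min},\rho_{\rm max}]$, this forces $\rho_h^m\in[\frac12\rho_{\rm min},\frac32\rho_{\rm max}]$, hence by \eqref{chi_property1} the cut-off is indeed inactive, and one likewise obtains uniform bounds for $\|u_h^m\|_{L^\infty(\Omega)}$, $\|P_h^{\rm RT} u_h^m\|_{L^\infty(\Omega)}$ and $\|\nabla\rho_h^m\|_{L^2(\Omega)}$ (the last needed to control the terms involving $D_\tau\chi(\rho_h^m)$). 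The nonlinear terms on the right-hand sides are then estimated using these a priori bounds, H\"older and Sobolev inequalities, absorption of the upwind dissipation and of $\mu\|\nabla e_u^n\|_{L^2(\Omega)}^2$, the discrete trace estimates, and the $L^2$-orthogonality $(\theta_\rho^n,w_h)=0$ for $w_h\in{\rm P}^2_{\rm dG}(\CT_h)$: the last is exploited by replacing $u^n$ with a local piecewise-linear approximant $\Pi_h u^n$ inside inner products against $\theta_\rho^n$, so the leftover factor $\|u^n-\Pi_h u^n\|_{L^\infty(\Omega)}\le Ch^{1/2}$ -- the price of $H^2$ regularity of $u$ in three dimensions -- combines with $\|\theta_\rho^n\|_{L^2(\Omega)}\le Ch^{2+\alpha}$ and an inverse inequality on $\nabla e_\rho^n$ to produce exactly the spatial rate $h^{3/2+\alpha}$. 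Summing over $n$ and applying the discrete Gronwall inequality reproduces the required bound on $e_\rho^n$ and $e_u^n$ at level $n$, which closes the induction; a final remark that the closed estimate coincides with the induction hypothesis removes the a priori assumption, adding back the projection errors gives the stated bound, and the threshold $h\le h_*$ is what makes the various ``$o(1)$'' quantities small enough for all of this to go through.

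I expect item (iii) above to be the principal obstacle, and the place where the post-processing is indispensable. Supplying an exactly solenoidal, tangential advecting velocity $P_h^{\rm RT} u_h^{n-1}$ means the density estimate never sees $\nabla\cdot u_h^{n-1}$ or $W^{1,\infty}(u_h^{n-1})$, but one must still carefully split $P_h^{\rm RT} u_h^{n-1}-u^n$ into a time lag, a post-processing error and the discrete velocity error, reorganize the inter-element face terms so that each occurrence of $\lb\theta_\rho^n\rb$ is paired against the jump $\lb e_\rho^n\rb$ that is controlled (up to $\theta$-corrections) by the dissipation, rather than against an undissipated average, and absorb the terms in which $e_u^{n-1}$ comes multiplied by a negative power of $h$ (from inverse estimates on broken norms) by invoking $\tau=o(h^{d/2})$. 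Keeping every Gronwall constant independent of $\tau$ and $h$, and threading the cut-off and positivity bookkeeping consistently through each step of the induction, is where the delicate work concentrates.
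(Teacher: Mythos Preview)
Your overall architecture --- error splitting via a Stokes--Ritz projection for $u$ and $P_h^{\rm dG}$ for $\rho$, the paired energy estimates exploiting that $P_h^{\rm RT}u_h^{n-1}$ is exactly divergence-free with vanishing normal trace, and an induction on the time level closed through inverse inequalities and $\tau=o(h^{d/2})$ --- is exactly the paper's route. Two technical ingredients, however, are the real content of the 3D argument and are missing from your outline.

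First, your mechanism for the rate $h^{3/2+\alpha}$ is misplaced. The $\theta_\rho^n$-orthogonality you invoke actually makes the volume term $(P_h^{\rm RT}u_h^{n-1}\,\theta_\rho^n,\nabla e_\rho^n)$ vanish \emph{identically}, since a divergence-free ${\rm RT}^1$ field is piecewise $P^1$ and the product lands in ${\rm P}^2_{\rm dG}$. The rate-limiting terms are instead the velocity perturbations $((u^{n-1}-P_h^{\rm RT}u^{n-1})\cdot\nabla\rho^n,e_\rho^n)$ and $(P_h^{\rm RT}(u^{n-1}-u_h^{n-1})\cdot\nabla\rho^n,e_\rho^n)$, which involve $\nabla\rho^n$ and admit no orthogonality trick; the paper handles them via the fractional embeddings $H^{2+\alpha}\hookrightarrow W^{1,6/(1-2\alpha)}$ and $H^2\hookrightarrow W^{3/2+\alpha,3/(1+\alpha)}$, which is where the exponent $\alpha$ actually enters. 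For the second of these terms one needs $\|P_h^{\rm RT}e_u^{n-1}\|_{L^p}\le C\|e_u^{n-1}\|_{H^1}$ for some $p>2$, and this is \emph{not} automatic: $P_h^{\rm RT}$ is the $L^2$ projection onto a constrained, nonlocal subspace. The paper proves a dedicated discrete Sobolev bound $\|P_h^{\rm RT}v\|_{L^6}\le C\|v\|_{H^1}$ (Lemma~\ref{lemma_postprocessing_lp}) by recasting $P_h^{\rm RT}v$ as the flux in a mixed FEM for a Neumann problem and invoking convex-domain $H^2$ regularity. This lemma is what replaces the $W^{1,\infty}$-control of $u_h$ needed in 2D, and your plan does not provide it.

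Second, in the velocity error equation the term $\tfrac12(D_\tau(\rho^n-\rho_h^n)\,u_h^n,e_u^n)$ cannot be closed with a crude $L^2$ bound on $D_\tau e_\rho^n$ (which loses a full power of $h$), nor does a bound on $\|\nabla\rho_h^m\|_{L^2}$ help here. The paper instead returns to the density error equation, integrates by parts, and proves the discrete negative-norm estimate $|(D_\tau e_\rho^n,\varphi_h)|\le C\|\varphi_h\|_{H^1(\CT_h)}\big(\|e_\rho^n\|_{L^2}+\|e_u^{n-1}\|_{L^2}+\tau+h^2\big)$ (Lemma~\ref{lemma_dtau_rho_h_minus}), used together with the $H^1(\CT_h)$-stability of $P_h^{\rm dG}$ (Lemma~\ref{lemma_P_rho_H1}) applied to $\varphi_h=P_h^{\rm dG}(u_h^n\cdot e_u^n)$. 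Without this device the Gronwall argument picks up an unabsorbable $h^{-1}$.
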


The proof of Theorem \ref{thm_proj_error} is presented in the next section. 
Throughout, we denote by $C$ a generic positive constant that may be different at different occurrences and may depend on the exact solution $(\rho, u, p)$ and $T$, but is independent of the mesh size $h$ and stepsize $\tau$. 

\begin{remark}
The convergence rates in Theorem \ref{thm_proj_error} is limited by the regularity of solutions and the nature of hyperbolic equation of $\rho$. It is known that even for linear hyperbolic equations, the DG method generally loses half-order convergence; see \cite[Corollary 2.32]{DiPietroErn2012}. 
Once the error estimates for velocity and density are obtained, a weaker error estimate for the pressure (losing additional half order in time and one order in space) can be obtained by using the method in \cite{Cai-Li-Li-2020}, which we omit in this paper. An error estimate for pressure without losing additional order of accuracy is still missing for this problem even in two dimensions.  
\end{remark}

\section{Error analysis}\label{sec:proof}

\subsection{Preliminary results}

We denote by $\CF_{h}^{I}$ and $\CF_{h}^{\partial}$ the set of all interior and boundary faces of $\CT_{h}$, respectively, and define $\CF_{h} := \CF_{h}^{I} \cup \CF_{h}^{\partial}$ to be the collection of all faces. For an interior face $F = \partial K\cap \partial K^{\prime}$ with $K,K^{\prime}\in\CT_h$, the average and jump defined in \eqref{average-jump}, initially defined on $\partial K$ and $\partial K'$, respectively, coincide on the face $F$ and can be rewritten as   
\begin{align*}
\Lb \phi \Rb := \frac{1}{2}(\phi + \phi^{\prime})
\quad\mbox{and}\quad
\lb \phi \rb  := \phi\nu_{K} + \phi^{\prime} \Vn_{K^{\prime}} \quad\mbox{on}\quad F, 
\end{align*}
where $\phi$ and $\phi^{\prime}$ denote the trace of $\phi$ from the interior of $K$ and $K^{\prime}$, respectively. If $F \in \CF_{h}^{\partial}$, then we define the average and jump of $\phi$ on $F$ as
\begin{align*}
\Lb \phi \Rb := \phi\quad\mbox{and}\quad
\lb \phi \rb := \phi \, \Vn .
\end{align*}

We denote by $\Pi_{h}^{\rm RT}$ the standard Raviart--Thomas projection from $H^{1}(\Omega)^{d}$ 
onto ${\rm RT}^1(\CT_{h})$, which has the following properties (cf. \cite[Lemma 17.1]{Thomee2006}) 
\begin{align}
&(\nabla\cdot \Pi_{h}^{\rm RT}v ,w_h) = (\nabla\cdot v ,w_h)
\quad\forall\, w_h\in P^1_{\rm dG}(\CT_h),
&&\forall\, v\in H(\text{div},\Omega) \cap H^{1}(\CT_{h})^{d} , \label{RT-proj-1} \\
& 
\Pi_{h}^{\rm RT}v\cdot\nu\in {\rm P}^1_{\rm dG}(\partial K),
\quad
\int_{\partial K}
\Pi_{h}^{\rm RT}v\cdot\nu \,w_h\d s
=\int_{\partial K}
v\cdot\nu \,w_h\d s &&\forall\, w_h\in {\rm P}^1_{\rm dG}(\partial K), \label{RT-proj-2} \\
&\Vert v - \Pi_{h}^{\rm RT} v \Vert_{L^{2}(\Omega)} 
\le C h^{l} \Vert v \Vert_{H^{l}(\Omega)} 
&&\forall\, v\in H^{l}(\Omega)^d ,\,\, l=1,2, \label{RT-proj-3}
\end{align} 
where ${\rm P}^1_{\rm dG}(\partial K)$ denotes the space of piecewise linear functions on $\partial K$ (possibly discontinuous at the vertices). 

We recall that 
$$
u^n=u(\cdot,t_n) 
\quad\mbox{and}\quad 
p^n=p(\cdot,t_n)
$$ 
the exact solutions $u$ and $p$ at time level $t=t_n$.
Since $\nabla\cdot u^{n} = 0$ in $\Omega$ and $u^{n}\cdot \nu = 0$ on $\partial\Omega$, 
it follows from \eqref{RT-proj-1}--\eqref{RT-proj-2} that 
$$
\nabla\cdot \Pi_{h}^{\rm RT} u^n
=0 
\,\,\,\mbox{in}\,\,\,\Omega 
\quad\mbox{and}\quad
\Pi_{h}^{\rm RT} u^n \cdot \nu =0\,\,\,\mbox{on}\,\,\,\partial\Omega.
$$
This implies $\Pi_{h}^{\rm RT} u^n\in {\rm RT}^1_0(\CT_{h})$ in view of the definition in \eqref{FEM_space_post_processed_velocity}. Since the $L^2$ projection $P_h^{\rm RT} u^{n}$ is the element in ${\rm RT}^1_0(\CT_{h})$ closest to $u^{n}$ in the $L^2$ norm, it follows that 
\begin{align}
\label{postprocessing_appr}
\Vert P_h^{\rm RT} u^{n} - u^{n}\Vert_{L^{2}(\Omega)} \le 
\Vert \Pi_{h}^{\rm RT}u^{n} - u^{n}\Vert_{L^{2}(\Omega)} 
\le C h^{l} \Vert u^{n} \Vert_{H^{l}(\Omega)} ,
\quad l=1,2, 
\end{align}
where the last inequality is due to \eqref{RT-proj-3}. This estimate of $\Vert P_h^{\rm RT} u^{n} - u^{n}\Vert_{L^{2}(\Omega)}$ will be used in the error analysis. 

Let $(\widehat u_h^n,\widehat p_h^n)\in \mathring {\rm P}^{\rm 1b}(\CT_{h})^d \times \widetilde {\rm P}^{\rm 1}(\CT_{h})$ be Stokes--Ritz projection of the exact solution $(u^n,p^n)\in H^1_0(\Omega)^d\times \widetilde L^2(\Omega)$, defined by 
\begin{align}\label{Stokes-Ritz-Proj}
\left\{
\begin{aligned}
&(\nabla \widehat u_h^n,\nabla v_h) - (\widehat p_h^n,\nabla\cdot v_h) 
=(\nabla u^n,\nabla v_h) - (p^n,\nabla\cdot v_h) 
&&\forall\, v_h\in \mathring {\rm P}^{\rm 1b}(\CT_{h})^d , \\
&(\nabla\cdot \widehat u_h^n,q_h)
=(\nabla\cdot u^n,q_h) 
&&\forall\, q_h\in \widetilde {\rm P}^{\rm 1}(\CT_{h}).
\end{aligned}
\right.
\end{align}
It is known that the Stokes--Ritz projection has the following approximation property (cf. \cite{Arnold-Brezzi-Fortin-1984,Boffi-2008}):
\begin{align}\label{assumption_stokes_proj} 
\Vert u^n-\widehat u_h^n\Vert_{L^{2}(\Omega)} 
+\Vert p^n-\widehat p_h^n\Vert_{L^{2}(\Omega)} 
\le
C h^{2} \left( \Vert u^n\Vert_{H^{2}(\Omega)} 
+ \Vert p^n\Vert_{H^{1}(\Omega)}  \right) .
\end{align}
Note that all finite element functions satisfy the following ``inverse inequality" (see \cite[\textsection 4.5]{Brenner-Scott}): 
$$
\| w_h \|_{W^{s_1,q}(\Omega)} \le Ch^{s_2-s_1+\frac{d}{q}-\frac{d}{p}} \| w_h \|_{W^{s_2,p}(\Omega)} 
\quad\mbox{for}\,\,\, 0\le s_2\le s_1\le 1,\,\,\, 1\le p\le q\le\infty . 
$$
where the constant $C$ depending on the finite element space of $w_h$ (but independent of $h$).
In addition, the Lagrange interpolation $I_{h}: \mathring 
C(\overline\Omega)^d\rightarrow \mathring {\rm P}^{1b}(\CT_{h})^d$ has the following error bound (see \cite[\textsection 4.4, Corollary 4.4.7]{Brenner-Scott}):   
$$
\| u^n -I_h u^n \|_{L^2(\Omega)} \le Ch^{2} \| u^n \|_{H^{2}(\Omega)}
\quad\mbox{anbd}\quad 
\| u^n -I_h u^n \|_{L^\infty(\Omega)} \le Ch^{2-\frac{d}{2}} \| u^n \|_{H^{2}(\Omega)} . 
$$
By using the two estimates above, from \eqref{assumption_stokes_proj} one can obtain 
\begin{align}\label{assumption_stokes_proj_Linfty} 
\Vert u^n-\widehat u_h^n\Vert_{L^{\infty}(\Omega)} 
& 
\le \Vert u^n-I_h u^n\Vert_{L^{\infty}(\Omega)} 
+\Vert I_h u^n-\widehat u_h^n\Vert_{L^{\infty}(\Omega)} 
\notag\\ 
&
\le Ch^{2-\frac{d}{2}} \| u^n \|_{H^{2}(\Omega)} 
+ Ch^{-\frac{d}{2}} \Vert I_h u^n-\widehat u_h^n\Vert_{L^{2}(\Omega)} 
 \notag\\ 
&
\le Ch^{2-\frac{d}{2}} \| u^n \|_{H^{2}(\Omega)} 
+ Ch^{-\frac{d}{2}} \Vert I_h u^n-u^n\Vert_{L^{2}(\Omega)} 
+ Ch^{-\frac{d}{2}} \Vert u^n-\widehat u_h^n\Vert_{L^{2}(\Omega)} 
\notag\\ 
&\le
C h^{2-\frac{d}{2}} 
\le
Ch^{\frac12},\quad\mbox{for}\,\,\, d=2,3. 
\end{align}
The Stokes--Ritz projection $(\widehat u_h^n,\widehat p_h^n)$ will serve as an intermediate solution for comparison with the numerical solution $(u_h^n,p_h^n)$. With the approximation property \eqref{assumption_stokes_proj}, it suffices to estimate the error $e_u^n=u_h^n-\widehat u_h^n$ and $e_p^n=p_h^n-\widehat p_h^n$ for the velocity equation. 

To control the coupling term in the hyperbolic density equation, the following discrete Sobolev embedding inequality will be used in the error analysis. 

\begin{lemma}
\label{lemma_postprocessing_lp}
In a convex polyhedron (or polygon) $\Omega$, the following inequality holds: 
\begin{align}
\label{postprocessing_lp}
\Vert P_h^{\rm RT} v \Vert_{L^{6}(\Omega)} \le C \Vert v\Vert_{H^{1}(\Omega)}
\quad\forall\, v \in H^{1}(\Omega)^{d} 
\,\,\,\mbox{and\,\,\,$v\cdot\nu=0$ on $\partial\Omega$.} 
\end{align}
\end{lemma}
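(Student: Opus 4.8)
The plan is to prove the bound $\|P_h^{\rm RT}v\|_{L^6(\Omega)}\le C\|v\|_{H^1(\Omega)}$ by interposing the Raviart--Thomas interpolant $\Pi_h^{\rm RT}v$ and exploiting that, for $v\in H^1(\Omega)^d$ with $v\cdot\nu=0$ on $\partial\Omega$, the function $\Pi_h^{\rm RT}v$ is a \emph{stable} and \emph{first-order accurate} approximation of $v$ in $L^2$. Write
\[
P_h^{\rm RT}v = (P_h^{\rm RT}v - \Pi_h^{\rm RT}v) + \Pi_h^{\rm RT}v,
\]
so that by the triangle inequality it suffices to bound each piece in $L^6$.

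First I would treat $\Pi_h^{\rm RT}v$. By the Sobolev embedding $H^1(\Omega)\hookrightarrow L^6(\Omega)$ (valid for $d\le 3$ on a convex, hence Lipschitz, domain), $\|v\|_{L^6(\Omega)}\le C\|v\|_{H^1(\Omega)}$, so it is enough to control $\|\Pi_h^{\rm RT}v - v\|_{L^6(\Omega)}$. On each element $K$ one has the local stability/approximation estimate $\|\Pi_h^{\rm RT}v - v\|_{L^6(K)}\le \|\Pi_h^{\rm RT}v - v\|_{L^\infty(K)}|K|^{1/6}$, and then using a scaling argument (or the $L^q$-version of the Raviart--Thomas interpolation error, $\|v-\Pi_h^{\rm RT}v\|_{L^q(K)}\le C h_K^{1+d/q-d/q}\|v\|_{W^{1,q}(K)}$-type bounds) together with the inverse inequality stated in the excerpt, one obtains $\|\Pi_h^{\rm RT}v - v\|_{L^6(\Omega)}\le C h^{\beta}\|v\|_{H^1(\Omega)}$ for a suitable $\beta\ge 0$; in fact a cleaner route is to note $\|\Pi_h^{\rm RT}v\|_{L^6(\Omega)}\le C\|\Pi_h^{\rm RT}v\|_{H(\mathrm{div})}\cdots$ is not available, so instead I would combine $\|\Pi_h^{\rm RT}v - \Pi_h^0 v\|_{L^6}$ estimates, where $\Pi_h^0$ is the elementwise $L^2$ projection onto piecewise constants, using that $\|\Pi_h^0 v\|_{L^6}\le\|v\|_{L^6}$ and that $\Pi_h^{\rm RT}v-\Pi_h^0 v$ is controlled locally by $h_K\|\nabla v\|_{L^2(K)}$ scaled appropriately. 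Either way, $\|\Pi_h^{\rm RT}v\|_{L^6(\Omega)}\le C\|v\|_{H^1(\Omega)}$.

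Next I would treat the discrete difference $w_h := P_h^{\rm RT}v - \Pi_h^{\rm RT}v\in {\rm RT}^1_0(\CT_h)$ (note $\nabla\cdot\Pi_h^{\rm RT}v=0$ and $\Pi_h^{\rm RT}v\cdot\nu=0$ here, exactly as argued for $u^n$ in the excerpt, so indeed $\Pi_h^{\rm RT}v\in {\rm RT}^1_0(\CT_h)$, and $w_h$ lies in that space). Since $P_h^{\rm RT}$ is the $L^2$-orthogonal projection onto ${\rm RT}^1_0(\CT_h)$, we have $\|w_h\|_{L^2(\Omega)} = \|P_h^{\rm RT}v - \Pi_h^{\rm RT}v\|_{L^2(\Omega)}\le \|v-\Pi_h^{\rm RT}v\|_{L^2(\Omega)}\le Ch\|v\|_{H^1(\Omega)}$ by \eqref{RT-proj-3}. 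Now apply the inverse inequality from the excerpt with $s_1=0$, $s_2=0$, $q=6$, $p=2$ (so the exponent is $h^{d/6-d/2}=h^{-d/3}$): $\|w_h\|_{L^6(\Omega)}\le Ch^{-d/3}\|w_h\|_{L^2(\Omega)}\le Ch^{1-d/3}\|v\|_{H^1(\Omega)}$. For $d=3$ this exponent is exactly $0$, and for $d=2$ it is positive, so in both cases $\|w_h\|_{L^6(\Omega)}\le C\|v\|_{H^1(\Omega)}$ with $h$-independent constant. Summing the two contributions gives the claim.

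The main obstacle is getting the $L^6$-stability of $\Pi_h^{\rm RT}v$ cleanly: the Raviart--Thomas interpolant requires slightly more than $L^2$ regularity of $v$ to be well-defined (one needs traces of $v\cdot\nu$ on faces), which is why the hypothesis $v\in H^1(\Omega)^d$ is used, and one must be careful that the local interpolation-error and inverse estimates are applied with constants independent of $h$ under the quasi-uniformity assumption on $\CT_h$. An alternative that sidesteps $\Pi_h^{\rm RT}$ entirely would be to test the defining relation \eqref{L2-Proj-RT} with a suitable $w_h$ and use a discrete Helmholtz/LBB-type argument, but the interpolation route above is the most economical given the tools already collected in the excerpt.
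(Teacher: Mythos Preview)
There is a genuine gap in the second step. You claim that $\Pi_h^{\rm RT}v\in {\rm RT}^1_0(\CT_h)$, ``exactly as argued for $u^n$ in the excerpt'', and then deduce $\|P_h^{\rm RT}v-\Pi_h^{\rm RT}v\|_{L^2}\le\|v-\Pi_h^{\rm RT}v\|_{L^2}$ from non-expansivity of the projection. But the argument for $u^n$ in the paper works only because $\nabla\cdot u^n=0$: property \eqref{RT-proj-1} gives $\nabla\cdot\Pi_h^{\rm RT}v$ equal to the piecewise-$P^1$ projection of $\nabla\cdot v$, which vanishes iff $\nabla\cdot v$ does. The lemma, however, assumes only $v\in H^1(\Omega)^d$ with $v\cdot\nu=0$; it is applied in the error analysis to $v=e_{u,h}^{n-1}=\widehat u_h^{n-1}-u_h^{n-1}$, which is merely \emph{discretely} divergence-free, not pointwise divergence-free. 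For such $v$ one has $\Pi_h^{\rm RT}v\notin {\rm RT}^1_0(\CT_h)$ in general, and in fact $\|P_h^{\rm RT}v-\Pi_h^{\rm RT}v\|_{L^2}$ need not be $O(h)$ at all: $P_h^{\rm RT}v$ approximates the (continuous) Helmholtz projection of $v$, while $\Pi_h^{\rm RT}v$ approximates $v$ itself, and these differ by an $O(1)$ gradient part when $\nabla\cdot v\neq 0$.

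The paper's proof resolves precisely this point by first passing to the divergence-free part of $v$. It sets up a mixed problem whose continuous solution is $\sigma=v+\nabla\phi$ with $-\Delta\phi=\nabla\cdot v$, $\partial_\nu\phi=0$, so that $\nabla\cdot\sigma=0$ and $\sigma\cdot\nu=0$; convexity of $\Omega$ yields $\phi\in H^2$ and hence $\sigma\in H^1$ with $\|\sigma\|_{H^1}\le C\|v\|_{H^1}$. The discrete mixed solution is shown to coincide with $P_h^{\rm RT}v$, and now $\Pi_h^{\rm RT}\sigma$ \emph{does} lie in ${\rm RT}^1_0(\CT_h)$, so the inverse-inequality step $\|P_h^{\rm RT}v-\Pi_h^{\rm RT}\sigma\|_{L^6}\le Ch^{1-d/3}\|v\|_{H^1}$ goes through exactly as you intended. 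Your first step (the $L^6$ bound on the interpolant) is also handled more cleanly in the paper by comparing $\Pi_h^{\rm RT}\sigma$ with $P_h^{\rm dG}\sigma$ and using $\|P_h^{\rm dG}\sigma\|_{L^6}\le C\|\sigma\|_{H^1}$. The missing idea in your attempt is this Helmholtz/Neumann step, which is where the convexity hypothesis is actually used.
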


\begin{proof}
Let ${\rm RT}^1_\nu(\CT_{h})$ be the subspace of ${\rm RT}^1(\CT_{h})$ with the boundary condition $\sigma_h\cdot\nu=0$ on $\partial\Omega$ for $\sigma_h\in {\rm RT}^1_\nu(\CT_{h})$. We define $(\sigma_{h}, \phi_{h}) \in 
{\rm RT}^1_\nu(\CT_{h}) \times {\rm P}^1_{\rm dG}(\CT_{h})$ to be the solution of the following mixed finite element equations: 
\begin{subequations}
\label{postprocessing_mixed_method_eqs}
\begin{align}
\label{postprocessing_mixed_method_eq1}
& (\sigma_{h}, \eta_{h}) 
+ (\phi_{h}, \nabla\cdot \eta_{h}) 
= (v, \eta_{h}) 
&&\hspace{-50pt} \forall\,\eta_h\in {\rm RT}^1_\nu(\CT_{h}) , \\ 
\label{postprocessing_mixed_method_eq2}
& (\nabla \cdot \sigma_{h}, \varphi_{h}) = 0
&&\hspace{-50pt} \forall\,\varphi_h\in {\rm P}^1_{\rm dG}(\CT_{h}) .
\end{align}
\end{subequations}
The second equation above implies $\nabla \cdot \sigma_{h}=0$. This together with the boundary condition $\sigma_h\cdot\nu=0$ on $\partial\Omega$ implies that $\sigma_h\in {\rm RT}^1_0(\CT_{h})$, which is defined in \eqref{FEM_space_post_processed_velocity}. By choosing $\eta_h\in {\rm RT}^1_0(\CT_{h})$ in the first equation we obtain $\sigma_h = P_h^{\rm RT} v$, i.e., the $L^2$ projection of $v$ onto ${\rm RT}^1_0(\CT_{h})$. 

The partial differential equations to which the mixed method 
(\ref{postprocessing_mixed_method_eqs}) approximates is 
\begin{align*}
&\sigma - \nabla \phi = v, \\
&\nabla\cdot \sigma = 0,
\end{align*}
with boundary condition $\sigma\cdot\nu = 0$ on $\partial\Omega$. Thus 
\begin{align*}
\left\{
\begin{aligned}
- \Delta \phi &= \nabla\cdot v && \text{ in } \Omega , \\
-\partial_\nu \phi &= 0 && \text{ on } \partial \Omega.
\end{aligned}
\right.
\end{align*}
By the regularity of the Neumann problem in a convex polyhedron (cf. \cite[Theorem 3.2.1.3 and Theorem 3.1.3.3]{Grisvard2011}), we have 
\begin{align*}
\Vert \phi \Vert_{H^2(\Omega)} 
\le C\Vert \nabla\cdot v\Vert_{L^2(\Omega)}
\le C\Vert v\Vert_{H^{1}(\Omega)}. 
\end{align*}
By the standard error estimate of the mixed FEM (cf. \cite[Theorem 17.1]{Thomee2006}), we have 
\begin{align*}
\Vert \sigma_{h} - \Pi_h^{\rm RT}\sigma \Vert_{L^{2}(\Omega)} 
\le C h \Vert \phi\Vert_{H^2(\Omega)} 
\le C h \Vert v\Vert_{H^{1}(\Omega)}. 
\end{align*}
This implies that, via the inverse inequality,  
\begin{align}\label{sigmah-Phsigma}
\Vert \sigma_{h} - \Pi_h^{\rm RT}\sigma \Vert_{L^{6}(\Omega)}
\le Ch^{-\frac{d}{3}}\Vert \sigma_{h} - \Pi_h^{\rm RT}\sigma \Vert_{L^{2}(\Omega)} 
\le C h^{1-\frac{d}{3}} \Vert v\Vert_{H^{1}(\Omega)}. 
\end{align}
Then, using the $L^{2}$-orthogonal projection $P_h^{\rm dG}:L^2(\Omega)^d\rightarrow {\rm P}^2_{\rm dG}(\CT_{h})^{d}$. 
In the case $d\in\{2,3\}$ we obtain, by using the triangle inequality, 
\begin{align*}
\Vert \sigma_{h} \Vert_{L^{6}(\Omega)}
&\le
\Vert \sigma_{h} - \Pi_h^{\rm RT}\sigma \Vert_{L^{6}(\Omega)}
+\Vert \Pi_h^{\rm RT}\sigma - P_h^{\rm dG} \sigma \Vert_{L^{6}(\Omega)} 
+ \Vert P_h^{\rm dG}  \sigma \Vert_{L^{6}(\Omega)} \\
&\le
\Vert \sigma_{h} - \Pi_h^{\rm RT}\sigma \Vert_{L^{6}(\Omega)} 
+ C h^{-\frac{d}{3}} \Vert \Pi_h^{\rm RT}\sigma  - P_h^{\rm dG} \sigma \Vert_{L^{2}(\Omega)}
+C \Vert \sigma \Vert_{H^{1}(\Omega)} 
\le C \Vert v\Vert_{H^{1}(\Omega)} , 
\end{align*}
where the last inequality uses \eqref{sigmah-Phsigma} and \eqref{postprocessing_appr}. This proves the desired result in 
Lemma \ref{lemma_postprocessing_lp}. 
\end{proof}

Let $H^{1}(\CT_{h})$ be the broken $H^1$ space, consisting of functions which are in $H^1(K)$ for all tetrahedra $K\in\CT_h$, 
equipped with the norm 
\begin{align}
\label{H1_discrete} 
\Vert \varphi \Vert_{H^{1}(\CT_{h})} := 
\bigg( \sum_{K\in\CT_h} \Vert \nabla \varphi \Vert_{L^{2}(K)}^{2} 
+ \sum_{F \in \mathcal{F}_{h}} h_{F}^{-1} \Vert \lb \varphi \rb \Vert_{L^{2}(F)}^{2} \bigg)^{\frac{1}{2}} , 
\end{align}
where $h_F$ denotes the diameter of face $F$, equivalent to the diameter of tetrahedron $K$ containing face $F$ according to the shape regularity of the partition. 
The $H^{1}(\CT_{h})$-stability of the $L^2$-orthogonal projection $P_h^{\rm dG}$ is presented in the following lemma. 

\begin{lemma}
\label{lemma_P_rho_H1}
The $L^2$ projection operator $P_h^{\rm dG}:L^2(\Omega)\rightarrow {\rm P}^2_{\rm dG}(\CT_{h})$ defined in \eqref{L2-Proj-dG} satisfies the following estimate: 
\begin{align*}
\Vert P_h^{\rm dG} \varphi \Vert_{H^{1}(\CT_{h})} 
\le C \Vert \varphi \Vert_{H^{1}(\CT_{h})} \quad 
\forall\, \varphi \in H^{1}(\CT_{h}).  
\end{align*}
\end{lemma}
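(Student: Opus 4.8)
The plan is to exploit the locality of the $L^2$-orthogonal projection $P_h^{\rm dG}$ together with local approximation estimates. Because ${\rm P}^2_{\rm dG}(\CT_h)$ imposes no interelement continuity, the projection decouples: on each tetrahedron $K$, $(P_h^{\rm dG}\varphi)|_K$ is simply the $L^2(K)$-orthogonal projection of $\varphi|_K$ onto $P_2(K)$. I would treat the two contributions to $\|\cdot\|_{H^1(\CT_h)}$ — the broken gradient term and the jump term — separately.

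\emph{The broken-gradient term.} Fix $K\in\CT_h$ and write $\pi_K$ for the $L^2(K)$-projection onto $P_2(K)$. Since $\pi_K$ reproduces constants (indeed $P_1(K)$), I would use the standard Bramble--Hilbert / scaling argument on the reference element: $\|\nabla \pi_K\varphi\|_{L^2(K)} \le \|\nabla(\pi_K\varphi - \bar\varphi_K)\|_{L^2(K)}$ where $\bar\varphi_K$ is the mean of $\varphi$ on $K$; an inverse inequality on $K$ gives $\|\nabla(\pi_K\varphi-\bar\varphi_K)\|_{L^2(K)} \le C h_K^{-1}\|\pi_K\varphi - \bar\varphi_K\|_{L^2(K)} = C h_K^{-1}\|\pi_K(\varphi-\bar\varphi_K)\|_{L^2(K)} \le C h_K^{-1}\|\varphi-\bar\varphi_K\|_{L^2(K)} \le C\|\nabla\varphi\|_{L^2(K)}$, the last step by \Poincare--Wirtinger on the shape-regular, quasi-uniform $K$. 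Summing over $K$ controls $\sum_K\|\nabla P_h^{\rm dG}\varphi\|_{L^2(K)}^2$ by $\sum_K\|\nabla\varphi\|_{L^2(K)}^2$.

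\emph{The jump term.} For a face $F = \partial K\cap\partial K'$ I would bound $\|\lb P_h^{\rm dG}\varphi\rb\|_{L^2(F)}$ by $\|\lb P_h^{\rm dG}\varphi - \varphi\rb\|_{L^2(F)} + \|\lb\varphi\rb\|_{L^2(F)}$; the first piece splits into the one-sided traces $\|(\pi_K\varphi - \varphi)|_F\|_{L^2(F)}$ and the analogous term from $K'$. A scaled trace inequality on $K$ gives $\|(\pi_K\varphi-\varphi)|_F\|_{L^2(F)}^2 \le C\big(h_K^{-1}\|\pi_K\varphi-\varphi\|_{L^2(K)}^2 + h_K\|\nabla(\pi_K\varphi-\varphi)\|_{L^2(K)}^2\big)$, and both interior terms are controlled by $h_K\|\nabla\varphi\|_{L^2(K)}^2$ using the approximation estimate $\|\varphi-\pi_K\varphi\|_{L^2(K)}\le C h_K\|\nabla\varphi\|_{L^2(K)}$ (again Bramble--Hilbert, since $\pi_K$ reproduces constants) and the gradient bound just proved. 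Multiplying by $h_F^{-1}\sim h_K^{-1}$, summing over faces, and using that each $K$ has a bounded number of faces yields $\sum_{F}h_F^{-1}\|\lb P_h^{\rm dG}\varphi - \varphi\rb\|_{L^2(F)}^2 \le C\sum_K\|\nabla\varphi\|_{L^2(K)}^2$, while $\sum_F h_F^{-1}\|\lb\varphi\rb\|_{L^2(F)}^2$ is part of $\|\varphi\|_{H^1(\CT_h)}^2$ by definition. Combining the two parts gives the claim.

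The main obstacle is purely bookkeeping: one must be careful that $\varphi\in H^1(\CT_h)$ only — no global continuity — so the jumps $\lb\varphi\rb$ genuinely appear on the right-hand side and cannot be discarded, and the trace estimate must be applied one element at a time with the correct $h_K$-weights, so that after multiplying by $h_F^{-1}$ the powers of $h$ cancel exactly and no negative power survives. Quasi-uniformity of $\CT_h$ is used only to identify $h_F$ with $h_K$ up to constants; shape regularity gives the trace and inverse inequalities with $h$-independent constants.
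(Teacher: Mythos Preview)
Your proposal is correct and follows essentially the same route as the paper: both arguments rest on the locality of $P_h^{\rm dG}$, the elementwise approximation property $\|\varphi-\pi_K\varphi\|_{L^2(K)}\le Ch_K\|\nabla\varphi\|_{L^2(K)}$, the scaled trace inequality, and a final triangle inequality on the jump term. The only cosmetic difference is that the paper first bounds $\|P_h^{\rm dG}\varphi-\varphi\|_{H^1(\CT_h)}$ as a whole and then applies the triangle inequality, whereas you bound $\|\nabla P_h^{\rm dG}\varphi\|_{L^2(K)}$ directly via mean-subtraction, inverse inequality, and \Poincare--Wirtinger; the ingredients and outcome are identical.
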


\begin{proof}
For any $K \in \CT_{h}$ the following standard $L^2$ and $H^1$ approximation properties hold: 
\begin{align*}
\Vert P_h^{\rm dG} \varphi - \varphi\Vert_{L^{2}(K)} \le C h_{K}\Vert \nabla \varphi \Vert_{L^{2}(K)} 
\text{  and  } \Vert \nabla (P_h^{\rm dG} \varphi - \varphi) \Vert_{L^{2}(K)}  
\le C \Vert \nabla \varphi \Vert_{L^{2}(K)}. 
\end{align*}
By the trace inequality on the tetrahedron $K$ and the above approximation properties, we have 
\begin{align*}
h_{F}^{-1} \Vert P_h^{\rm dG} \varphi - \varphi \Vert_{L^{2}(\partial K)}^{2} 
\le  C \left( h_{K}^{-2} \Vert P_h^{\rm dG} \varphi - \varphi\Vert_{L^{2}(K)}^{2}  
+ \Vert \nabla (P_h^{\rm dG} \varphi - \varphi) \Vert_{L^{2}(K)}^{2}\right) 
\le  C \Vert \nabla \varphi \Vert_{L^{2}(K)}^{2}. 
\end{align*}
Hence, 
\begin{align*}
\Vert P_h^{\rm dG} \varphi - \varphi \Vert_{H^{1}(\CT_{h})}^{2} 
& =
\sum_{K\in \CT_{h}} 
\big( \Vert \nabla (P_h^{\rm dG} \varphi - \varphi ) \Vert_{L^{2}(K)}^{2} 
+ h_{F}^{-1} \Vert \lb P_h^{\rm dG} \varphi - \varphi  \rb 
\Vert_{L^{2}(\partial K)}^{2} \big)  \\
&\le 
C \sum_{K \in \CT_{h}} \Vert \nabla \varphi \Vert_{L^{2}(K)}^{2}
\le
C \Vert \varphi \Vert_{H^{1}(\CT_{h})}^{2}  . 
\end{align*}
The desired result follows from the above inequality and the triangle inequality. 
\end{proof}

\subsection{Mathematical induction}

We define the following error functions: 
\begin{align*}
e_{\rho, h}^{n} = P_h^{\rm dG} \rho^{n} - \rho_{h}^{n}, \quad 
 e_{u,h}^{n} = \widehat u_{h}^{n} - u_{h}^{n}, \quad 
e_{p,h}^{n} = \widehat p_{h}^{n}  - p_{h}^{n}.
\end{align*} 

For a given $1 \le m \le N$, we assume that the data $\rho_{h}^{n-1}$ and $u_{h}^{n-1}$, $n=1,2,\cdots, m$ are given and satisfying the following inequalities (errors on the previous time level are sufficiently small in some sense): 
\begin{subequations}
\label{pre_step_errors}
\begin{align}
\label{pre_step_error1}
\max_{1\le n \le m} \Vert e_{\rho,h}^{n-1}\Vert_{L^{\infty}(\Omega)} 
&\le 
\frac{1}{4} \rho_{\rm min}, \\
\label{pre_step_error2}
\max_{1\le n \le m} \Vert  e_{u,h}^{n-1}\Vert_{L^{2}(\Omega)} 
&\le  
h^{\frac{3}{2}+\frac{\alpha}{2}}  + \tau^{\frac56}  , \\ 
\label{pre_step_error3}
\max_{1\le n \le m} \Vert  e_{u,h}^{n-1}\Vert_{L^{\infty}(\Omega)} 
&\le 1 \\
\label{pre_step_error5}
\max_{1\le n \le m} \Vert  P_h^{\rm RT} u_{h}^{n-1} - u^{n-1}\Vert_{L^{\infty}(\Omega)}
&\le 2 , \\
\label{pre_step_error4}
\sum_{n=1}^{m} \tau \Vert  e_{\Vu, h}^{n-1} \Vert_{H^{1}(\Omega)}^{2} 
&\le (\kappa+h^{\alpha}) h^3  , 
\end{align}
\end{subequations}
where $\kappa$ is a sufficiently small constant to be determined later in \eqref{rho_errors}--\eqref{proved_now_step_error1}. Then we prove that the numerical solution $(\rho_{h}^{m}, u_{h}^{m}, p_{h}^{m}) \in {\rm P}^2_{\rm dG}(\CT_{h}) \times \mathring{\rm P}^{\rm 1b}(\CT_{h}) \times \widetilde{\rm P}^1(\CT_{h})$ given by (\ref{DNS_FEM_eqs}) satisfies the following inequalities: 
\begin{subequations}
\label{now_step_errors}
\begin{align}
\label{now_step_error1}
\max_{0 \le n \le m} \Vert e_{\rho,h}^{n}\Vert_{L^{\infty}(\Omega)} 
&\le \frac{1}{4} \rho_{\rm min}, \\
\label{now_step_error2}
\max_{0 \le n \le m} \Vert  e_{u,h}^{n}\Vert_{L^{2}(\Omega)} 
&\le 
h^{\frac{3}{2}+\frac{\alpha}{2}}  + \tau^{\frac56}  , \\ 
\label{now_step_error3}
\max_{0 \le n \le m} \Vert  e_{u,h}^{n}\Vert_{L^{\infty}(\Omega)} 
&\le 1,\\
\label{now_step_error5}
\max_{0\le n \le m} \Vert  P_h^{\rm RT} u_{h}^{n} - u^{n}\Vert_{L^{\infty}(\Omega)}
&\le 2 ,\\
\label{now_step_error4}
\sum_{n=0}^{m} \tau \Vert  e_{u,h}^{n} \Vert_{H^{1}(\Omega)}^{2} 
&\le (\kappa+h^{\alpha}) h^3   .
\end{align}
\end{subequations} 
If this can be proved then, by mathematical induction, \eqref{now_step_errors} holds for all $1\le n\le N$. 
To use mathematical induction, we emphasize that all the generic constants below will be independent of $m$ 
(but may depend on $T$).

The induction assumption (\ref{pre_step_error1}) implies that 
\begin{align*}
& \Vert I_{h}^{\rm dG} \rho^{n-1} - \rho_{h}^{n-1} \Vert_{L^{\infty}(\Omega)} \\
& \le \Vert I_{h}^{\rm dG} \rho^{n-1} - P_h^{\rm dG}\rho^{n-1} \Vert_{L^{\infty}(\Omega)} 
+ \Vert e_{\rho, h}^{n-1}\Vert_{L^{\infty}(\Omega)} \\
& \le C h^{-\frac{d}{2}} \Vert I_{h}^{\rm dG} \rho^{n-1} - P_h^{\rm dG}
\rho^{n-1} \Vert_{L^{2}(\Omega)} + \Vert e_{\rho, h}^{n-1}\Vert_{L^{\infty}(\Omega)} \\
& \le C h^{2-\frac{d}{2}} \Vert \rho^{n-1} \Vert_{H^{2}(\Omega)} + 
\frac{1}{4} \rho_{\rm min} 
\le \frac{3}{8} \rho_{\rm min}, 
\quad\text{when $h$ is sufficiently small.}
\end{align*}
Since the nodal interpolation $I_{h}^{\rm dG}\rho^{n-1}$ satisfies 
\begin{align*}
\|\rho^{n-1}-I_{h}^{\rm dG}\rho^{n-1}\|_{L^\infty(\Omega)}
&\le Ch^{2-\frac{d}{2}}\|\rho^{n-1}\|_{C^{\frac12}(\Omega)} \\
&\le Ch^{2-\frac{d}{2}}\|\rho^{n-1}\|_{H^{2+\alpha}(\Omega)} \quad\mbox{(Sobolev embedding $H^{2+\alpha}(\Omega)\hookrightarrow C^{\frac12}(\Omega)$)} \\
&\le \frac{1}{8} \rho_{\rm min} \quad\mbox{when $h$ is sufficiently small},
\end{align*}
it follows that (by using the triangle inequality)
\begin{align*}
\|\rho^{n-1}-\rho_h^{n-1}\|_{L^\infty(\Omega)}
&\le \frac{1}{2}\rho_{\rm min} ,
\end{align*}
which implies 
\begin{align}\label{rho_inf_sup_pre}
\frac{1}{2}\rho_{\rm min} \le \rho_h^{n-1}(x) 
\le \frac{3}{2} \rho_{\rm max} ,
\quad n = 1,\cdots, m , 
\end{align}
in view of the definition of $\rho_{\min}$ and $\rho_{\max}$ in \eqref{def-rho-min-max}. 

Similarly, the error estimate \eqref{assumption_stokes_proj} for the Stokes--Ritz projection and \eqref{pre_step_error3} imply that 
\begin{align}\label{u_inf_sup_pre}
\Vert u_{h}^{n-1}\Vert_{L^{\infty}(\Omega)}
&\le  \Vert e_{u,h}^{n-1} \Vert_{L^{\infty}(\Omega)}
+ \Vert \widehat u_h^{n-1} - I_{h}u^{n-1} 
\Vert_{L^{\infty}(\Omega)} + \Vert I_{h}u^{n-1} - u^{n-1} \Vert_{L^{\infty}(\Omega)} 
+ \Vert u^{n-1} \Vert_{L^{\infty}(\Omega)} \nonumber \\ 
\nonumber
&\le  1
+ h^{-\frac{d}{2}} \Vert \widehat u_h^{n-1} - I_{h}u^{n-1} 
\Vert_{L^2(\Omega)}
+ C h^{\frac{1}{4}} \Vert u^{n-1} \Vert_{C^{\frac14}(\Omega)} + \Vert u^{n-1} \Vert_{L^{\infty}(\Omega)}\\ 
\nonumber
&\le 1
+ C h^{2-\frac{d}{2}} \left(\Vert \Vu^{n-1} \Vert_{H^{2}(\Omega)} 
+ \Vert p^{n-1}\Vert_{H^{1}(\Omega)} \right)
 + C h^{\frac{1}{4}} \Vert u^{n-1} \Vert_{H^{2}(\Omega)} + \Vert u^{n-1} \Vert_{L^{\infty}(\Omega)}\\
 &\le 2 + \Vert u^{n-1} \Vert_{L^{\infty}(\Omega)}
 \quad\mbox{(when $h$ is sufficiently small)} .
\end{align}
Meanwhile, \eqref{pre_step_error5} implies 
\begin{align}
\label{postprocessing_u_inf_sup_pre}
& 
\max_{1\le n\le m}
\Vert  P_h^{\rm RT} u_{h}^{n-1}\Vert_{L^{\infty}(\Omega)} 
\le 2 + \Vert u\Vert_{L^\infty(0,T;L^{\infty}(\Omega))}  .
\end{align}
The boundedness of numerical solutions in \eqref{rho_inf_sup_pre}--\eqref{postprocessing_u_inf_sup_pre} will be used in the following error analysis in estimating the nonlinear terms. 

\subsection{Estimates for $e_{\rho, h}^{n}$}

From (\ref{DNS_eq1}) we know that the exact solution $\rho^n$ satisfies the equation 
\begin{align}
\label{DNS_time_discrete_eq1}
(D_{\tau}\rho^{n}, \varphi_{h}) + (\Vu^{n-1}\cdot \nabla \rho^{n}, \varphi_{h}) 
= (R^n_\rho , \varphi_{h})
\quad \forall\, \varphi_{h} \in {\rm P}^2_{\rm dG}(\CT_{h}) 
\end{align}
with 
\begin{align*}
R^n_\rho =  D_{\tau}\rho^{n} - \partial_{t} \rho^{n}  
+  (\Vu^{n-1} -  u^{n})\cdot \nabla \rho^{n} .
\end{align*}
Subtracting (\ref{DNS_FEM_eq1}) from (\ref{DNS_time_discrete_eq1}) yields 
\begin{align}
\label{DNS_error_eq1_old}
& (D_{\tau}(\rho^{n} - P_h^{\rm dG}\rho^{n}), \varphi_{h}) 
+ (D_{\tau}e_{\rho, h}^{n}, \varphi_{h}) 
\nonumber \\
\nonumber
& \quad + ((P_h^{\rm RT}u_{h}^{n-1})\cdot \nabla 
(\rho^{n} - P_h^{\rm dG}\rho^{n}), \varphi_{h}) 
+ ((P_h^{\rm RT}u_{h}^{n-1})\cdot \nabla e_{\rho, h}^{n}, \varphi_{h}) \\
\nonumber
&\quad 
- \sum_{K \in \CT_{h}} \langle  P_h^{\rm RT} u_{h}^{n-1}  \cdot 
\lb \rho^n-P_h^{\rm dG}\rho^n \rb, \varphi_{h} \rangle_{\partial K_{-}^{n}} 
 - \sum_{K \in \CT_{h}} \langle  P_h^{\rm RT} u_{h}^{n-1}  \cdot 
\lb e_{\rho,h}^{n} \rb, \varphi_{h} \rangle_{\partial K_{-}^{n}} 
\nonumber \\
& \quad + ((\Vu^{n-1} - P_h^{\rm RT}\Vu^{n-1})\cdot 
\nabla \rho^{n},\varphi_{h}) 
+ (P_h^{\rm RT}(\Vu^{n-1} - u_{h}^{n-1})\cdot 
\nabla \rho^{n}, \varphi_{h}) \nonumber \\
& =  (R^n_\rho , \varphi_{h})
\qquad \forall \varphi_{h} \in {\rm P}^2_{\rm dG}(\CT_{h}).
\end{align}

On a face $F\in \mathcal{F}_{h}^{I}$ we denote by $\widehat{P_h^{\rm dG}\rho^{n}}$ the value of 
$P_h^{\rm dG}\rho^{n}$ from the in-flow side. Then, by using integration by parts, we have
\begin{align*}
& (P_h^{\rm RT}u_{h}^{n-1} \cdot \nabla 
(\rho^{n} - P_h^{\rm dG}\rho^{n}), \varphi_{h}) - \sum_{K \in \CT_{h}} \langle  P_h^{\rm RT} u_{h}^{n-1}  \cdot 
\lb \rho^n-P_h^{\rm dG}\rho^n \rb, \varphi_{h} \rangle_{\partial K_{-}^{n}}   \\
&=   - ( P_h^{\rm RT}u_{h}^{n-1} 
 (\rho^{n} - P_h^{\rm dG}\rho^{n}), \nabla \varphi_{h}) 
+\sum_{K \in \CT_{h}} \langle ( P_h^{\rm RT} u_{h}^{n-1} \cdot \nu_{K} ) (\rho^{n} - \widehat{P_h^{\rm dG}\rho^{n}}), \varphi_{h} \rangle_{\partial K} \\
&\hspace{295pt}
\quad\forall\, \varphi_{h} \in {\rm P}^2_{\rm dG}(\CT_{h}) .
\end{align*}
Then, substituting this identity into (\ref{DNS_error_eq1_old}), we obtain  
\begin{align*}
& (D_{\tau}(\rho^{n} - P_h^{\rm dG}\rho^{n}), \varphi_{h}) 
+ (D_{\tau}e_{\rho, h}^{n}, \varphi_{h}) \nonumber \\[2pt] 
\nonumber 
&\quad\, - (P_h^{\rm RT}u_{h}^{n-1} 
 (\rho^{n} - P_h^{\rm dG}\rho^{n}), \nabla \varphi_{h}) \nonumber \\[2pt] 
&\quad\,
+\sum_{K \in \CT_{h}} \langle ( P_h^{\rm RT} u_{h}^{n-1} 
\cdot \nu_{K} ) (\rho^{n} - \widehat{P_h^{\rm dG}\rho^{n}}), \varphi_{h} \rangle_{\partial K} \nonumber \\ 
&\quad\,
+ (P_h^{\rm RT}u_{h}^{n-1}\cdot \nabla e_{\rho, h}^{n}, \varphi_{h})  - \sum_{K \in \CT_{h}} \langle  P_h^{\rm RT} u_{h}^{n-1}  \cdot 
\lb e_{\rho,h}^{n} \rb, \varphi_{h} \rangle_{\partial K_{-}^{n}} \nonumber \\
&\quad\, + ((\Vu^{n-1} - P_h^{\rm RT}\Vu^{n-1})\cdot 
\nabla \rho^{n},\varphi_{h}) 
+ (P_h^{\rm RT}(\Vu^{n-1} - u_{h}^{n-1})\cdot 
\nabla \rho^{n}, \varphi_{h}) \nonumber \\[5pt]
&=  ( R^n_{\rho} , \varphi_{h})
\qquad \forall \varphi_{h} \in {\rm P}^2_{\rm dG}(\CT_{h}) ,
\end{align*}
which can be rewritten as 
\begin{align}
\label{DNS_error_eq1}
& (D_{\tau}e_{\rho, h}^{n}, \varphi_{h}) 
+ (P_h^{\rm RT}u_{h}^{n-1}\cdot \nabla e_{\rho, h}^{n}, \varphi_{h})  - \sum_{K \in \CT_{h}} \langle  P_h^{\rm RT} u_{h}^{n-1}  \cdot 
\lb e_{\rho,h}^{n} \rb, \varphi_{h} \rangle_{\partial K_{-}^{n}} \nonumber \\
&=
-(D_{\tau}(\rho^{n} - P_h^{\rm dG}\rho^{n}), \varphi_{h}) 
+ (P_h^{\rm RT}u_{h}^{n-1} 
 (\rho^{n} - P_h^{\rm dG}\rho^{n}), \nabla \varphi_{h}) \nonumber \\[2pt] 
&\quad\,
-\sum_{K \in \CT_{h}} \langle ( P_h^{\rm RT} u_{h}^{n-1} 
\cdot \nu_{K} ) (\rho^{n} - \widehat{P_h^{\rm dG}\rho^{n}}), \varphi_{h} \rangle_{\partial K} \nonumber \\ 
&\quad\,
- ((\Vu^{n-1} - P_h^{\rm RT}\Vu^{n-1})\cdot 
\nabla \rho^{n},\varphi_{h}) 
- (P_h^{\rm RT}(\Vu^{n-1} - u_{h}^{n-1})\cdot 
\nabla \rho^{n}, \varphi_{h}) \nonumber \\
&\quad\,
+   ( R^n_{\rho} , \varphi_{h}) \nonumber \\
&=: \sum_{j=1}^6 E_j^n(\varphi_h) .
\end{align}
Since $\nabla\cdot (P_h^{\rm RT}u_{h}^{n-1}) = 0$  
and $(P_h^{\rm RT}u_{h}^{n-1}) \cdot \Vn |_{\partial \Omega} = 0$, 
it can be verified that
\begin{align}
\label{dg_energy}
 & (P_h^{\rm RT}u_{h}^{n-1}\cdot \nabla e_{\rho, h}^{n}, e_{\rho,h}^n)  - \sum_{K \in \CT_{h}} \langle  P_h^{\rm RT} u_{h}^{n-1}  \cdot 
\lb e_{\rho,h}^{n} \rb, e_{\rho,h}^n \rangle_{\partial K_{-}^{n}}  \\ 
\nonumber
& = \dfrac{1}{2} \sum_{F\in \mathcal{F}_{h}^{I}} \Vert 
\vert (P_h^{\rm RT}u_{h}^{n-1})\cdot \Vn_{F}\vert^{\frac{1}{2}} 
 \lb e_{\rho, h}^{n} \rb \Vert_{L^{2}(F)}^{2} , 
\end{align}
which is similar as \eqref{rho-test-identity}.  
Since $E_1^n(\varphi_h)=(D_{\tau}\rho^{n} 
- P_h^{\rm dG}(D_{\tau}\rho^{n}), \varphi_{h}) = 0$ for any $\varphi_{h}\in {\rm P}^2_{\rm dG}(\CT_{h})$, substituting  
$\varphi_{h} = e_{\rho, h}^{n}$ into (\ref{DNS_error_eq1}) yields
\begin{align}\label{Error-ineq-e-rho}
& \frac{1}{2} D_{\tau}\big( \Vert e_{\rho, h}^{n}\Vert_{L^{2}(\Omega)}^{2} \big) 
+ \frac{1}{2} \sum_{F\in \mathcal{F}_{h}^{I}} \big \Vert \vert (P_h^{\rm RT}u_{h}^{n-1})\cdot \Vn_{F}\vert^{\frac{1}{2}} 
\lb e_{\rho, h}^{n} \rb \big \Vert_{L^{2}(F)}^{2} 
\le 
\sum_{j=2}^6  |E_j^n(e_{\rho, h}^{n})| .
\end{align}
In the following, we estimate $|E_j^n(e_{\rho, h}^{n})|$ for $j=2,\dots,6$. 

We notice that $\nabla\cdot \left( P_h^{\rm RT}u_{h}^{n-1} \right) = 0$ in $\Omega$, 
and $ P_h^{\rm RT}u_{h}^{n-1}  \in \text{RT}^{1}(\CT_{h})$.  Thus we have 
$P_h^{\rm RT}u_{h}^{n-1} \in P^{1}(\CT_{h})^{d}$.  Then by the definition of $P_h^{\rm dG}\rho^{n}$, 
we have 
\begin{align} \label{E2n-e-rho} 
|E_2^n(e_{\rho, h}^{n})| = \vert  (P_h^{\rm RT}u_{h}^{n-1} 
 (\rho^{n} - P_h^{\rm dG}\rho^{n}), \nabla e_{\rho, h}^{n}) \vert = 0.
\end{align}

Since the value of $P_h^{\rm RT} u_{h}^{n-1} 
\cdot \nu_{F} \, (\rho^{n} - \widehat{P_h^{\rm dG}\rho^{n}}) $ on a face $F\subset\partial K$ is independent of the tetrahedron containing the face $F$, and $P_h^{\rm RT} u_{h}^{n-1} 
\cdot \nu_{F} \, (\rho^{n} - \widehat{P_h^{\rm dG}\rho^{n}}) =0$ on the boundary faces, 
it follows that 
\begin{align}
\label{interface_term_bound}
&\vert E_3^n(e_{\rho,h}^n) \vert \notag \\
\nonumber 
&= \bigg|\sum_{F \in \mathcal{F}_{h}^{I}} \big\langle   
P_h^{\rm RT} u_{h}^{n-1} 
\cdot \nu_{F} \, 
(\rho^{n} - \widehat{P_h^{\rm dG}\rho^{n}}) , \lb e_{\rho,h}^n  \rb \cdot \nu_{F} \big\rangle_{F} \bigg| \\
& \le  \sum_{F \in \mathcal{F}_{h}^{I}}  \Big\Vert  \vert  P_h^{\rm RT} u_{h}^{n-1} 
\cdot  \nu_{F}\vert^{\frac{1}{2}} (\rho^{n} - \widehat{P_h^{\rm dG}\rho^{n}} ) 
\Big\Vert_{L^{2}(F)}^{2}
+
\frac14\sum_{F \in \mathcal{F}_{h}^{I}} \Big\Vert \vert P_h^{\rm RT} u_{h}^{n-1}
\cdot  \Vn_{F}\vert^{\frac{1}{2}} \lb e_{\rho,h}^n \rb\Big\Vert_{L^{2}(F)}^{2} \nonumber \\
& \le  \Vert P_h^{\rm RT} u_{h}^{n-1}\Vert_{L^{\infty}(\Omega)}
\sum_{F \in \mathcal{F}_{h}^{I}} \Vert \rho^{n} - \widehat{P_h^{\rm dG}\rho^{n}} \Vert_{L^{2}(F)}^{2}  
+
\frac14\sum_{F \in \mathcal{F}_{h}^{I}} \Big\Vert \vert P_h^{\rm RT} u_{h}^{n-1}
\cdot  \Vn_{F}\vert^{\frac{1}{2}} \lb e_{\rho,h}^n \rb\Big\Vert_{L^{2}(F)}^{2} \nonumber \\
& \le  C
\sum_{K \in \CT_h} 
\left( h^{-1} \Vert \rho^{n} - P_h^{\rm dG}\rho^{n} \Vert_{L^{2}(K)}^{2}  
+ h \Vert \rho^{n} -  P_h^{\rm dG}\rho^{n} \Vert_{H^{1}(K)}^{2}  \right) \notag \\
&\quad\, 
+
\frac14\sum_{F \in \mathcal{F}_{h}^{I}} \Big\Vert \vert P_h^{\rm RT} u_{h}^{n-1}
\cdot  \Vn_{F}\vert^{\frac{1}{2}} \lb e_{\rho,h}^n \rb\Big\Vert_{L^{2}(F)}^{2} \nonumber \\
& \le  Ch^{-1} h^{4+2\alpha} \Vert \rho^{n}\Vert_{H^{2+\alpha}(\Omega)}^{2}  
+
\frac14\sum_{F \in \mathcal{F}_{h}^{I}} \Big\Vert \vert P_h^{\rm RT} u_{h}^{n-1}
\cdot  \Vn_{F}\vert^{\frac{1}{2}} \lb e_{\rho,h}^n \rb\Big\Vert_{L^{2}(F)}^{2} ,
\end{align}
where we have the inequality $ab = (\sqrt{2} a) (b/\sqrt{2} ) \le \frac12(\sqrt{2} a)^2+\frac12(b/\sqrt{2} )^2= a^2+b^2/4$.

For $\alpha\in(0,\frac12)$, by using the Sobolev embedding $H^{2+\alpha}(\Omega)\hookrightarrow W^{1,\frac{6}{1-2\alpha}}(\Omega)$ and $H^{2}(\Omega)\hookrightarrow W^{\frac{3}{2}+\alpha, \frac{3}{1+\alpha}}(\Omega)$ (cf. \cite[Theorem 7.43]{AdamsFournier2003}), we have 
\begin{align}
|E_4^n(e_{\rho, h}^{n})| & =  \vert  ((\Vu^{n-1} - P_h^{\rm RT}\Vu^{n-1})\cdot 
\nabla \rho^{n}, e_{\rho, h}^{n}) \vert  \nonumber \\
 & \le \Vert \Vu^{n-1} - P_h^{\rm RT}\Vu^{n-1}\Vert_{L^{\frac{3}{1+\alpha}}(\Omega)} 
\Vert \nabla \rho^{n} \Vert_{L^{\frac{6}{1-2\alpha}}(\Omega)} 
\Vert e_{\rho, h}^{n} \Vert_{L^{2}(\Omega)} 
\quad\mbox{(H\"older's inequality)} \nonumber \\
& \le C \Big( \Vert u^{n-1} - I_h u^{n-1} \Vert_{L^{\frac{3}{1+\alpha}}(\Omega)}
+ \Vert I_h u^{n-1}  - P_h^{\rm RT}
\Vu^{n-1}\Vert_{L^{\frac{3}{1+\alpha}}(\Omega)}  \Big) 
\Vert \rho^{n}\Vert_{H^{2+\alpha}(\Omega)} \Vert e_{\rho, h}^{n} \Vert_{L^{2}(\Omega)} \nonumber \\ 
& \le C \Big( \Vert u^{n-1} - I_h u^{n-1}\Vert_{L^{\frac{3}{1+\alpha}}(\Omega)} 
+ h^{-\frac{1}{2}+ \alpha}\Vert I_{h}\Vu^{n-1} - P_h^{\rm RT}
\Vu^{n-1}\Vert_{L^{2}(\Omega)} \Big) 
\Vert e_{\rho, h}^{n} \Vert_{L^{2}(\Omega)} \nonumber \\ 
& \le C h^{\frac{3}{2}+\alpha} \Big( 
 \Vert \Vu^{n-1}\Vert_{W^{\frac{3}{2}+\alpha,\frac{3}{1+\alpha}}(\Omega)} 
+ \Vert \Vu^{n-1}\Vert_{H^{2}(\Omega)} \Big)
\Vert e_{\rho, h}^{n} \Vert_{L^{2}(\Omega)} 
\qquad \mbox{(by using \eqref{postprocessing_appr})} \nonumber \\
& \le C \epsilon^{-1} h^{3+2\alpha} + \epsilon \Vert e_{\rho, h}^{n} \Vert_{L^{2}(\Omega)}^{2} , \\[10pt]
|E_5^n(e_{\rho, h}^{n})|  & = \vert (P_h^{\rm RT}(u^{n-1} - u_{h}^{n-1}) \cdot \nabla \rho^{n}, e_{\rho, h}^{n}) \vert  \nonumber \\
& \le \Vert P_h^{\rm RT}(u^{n-1} - u_{h}^{n-1})\Vert_{L^{\frac{3}{1+\alpha}}(\Omega)}
 \Vert\nabla \rho^{n}\Vert_{L^{\frac{6}{1-2\alpha}}(\Omega)}
 \Vert e_{\rho, h}^{n} \Vert_{L^{2}(\Omega)} \nonumber \\
& \le C \Big( \Vert P_h^{\rm RT} (u^{n-1} - \widehat u_h^{n-1})
\Vert_{L^{\frac{3}{1+\alpha}}(\Omega)}
+ \Vert P_h^{\rm RT} e_{u,h}^{n-1}\Vert_{L^{\frac{3}{1+\alpha}}(\Omega)} 
\Big) 
\Vert \rho^{n}\Vert_{H^{2+\alpha}(\Omega)} \Vert e_{\rho, h}^{n} \Vert_{L^{2}(\Omega)} \nonumber \\
& \le C \left(
h^{\frac{3}{2}+\alpha}(\Vert u^{n-1}\Vert_{H^{2}(\Omega)} 
+ \Vert p^{n-1}\Vert_{H^{1}(\Omega)} )
+ \Vert e_{u,h}^{n-1}\Vert_{H^{1}(\Omega)} \right)
\Vert e_{\rho, h}^{n} \Vert_{L^{2}(\Omega)} \qquad (\text{by } (\ref{postprocessing_lp}) ) \nonumber  \\
& \le  \epsilon  \Vert  e_{u,h}^{n-1}\Vert_{H^{1}(\Omega)}^{2} 
+ C\epsilon^{-1} \Vert e_{\rho, h}^{n} \Vert_{L^{2}(\Omega)}^{2} + C h^{3+2\alpha}, \\[10pt]
|E_6^n(e_{\rho, h}^{n})| 
& 
\le C \tau 
\big( \Vert \partial_{t}^{2} \rho  \Vert_{L^{2}(\Omega)} 
+ \Vert \partial_{t} \Vu \Vert_{L^{3}(\Omega)} \|\nabla \rho\|_{L^{6}(\Omega)}\big) \Vert e_{\rho, h}^{n} \Vert_{L^{2}(\Omega)} \nonumber \\
& \le C \tau^{2} + C \Vert e_{\rho, h}^{n} \Vert_{L^{2}(\Omega)}^{2}. 
\label{E7n-e-rho}
\end{align}

Substituting \eqref{E2n-e-rho}--\eqref{E7n-e-rho} into \eqref{Error-ineq-e-rho}, we obtain for $1\le n \le m$, 
\begin{align*}
& D_{\tau} \Vert e_{\rho, h}^{n} \Vert_{L^{2}(\Omega)}^{2} 
+ \frac14 \sum_{F\in \mathcal{F}_{h}^{I}} \Vert 
\vert (P_h^{\rm RT}u_{h}^{n-1})\cdot \Vn_{F}\vert^{\frac{1}{2}} 
\lb e_{\rho, h}^{n} \rb \Vert_{L^{2}(F)}^{2} \\
& 
\le C\epsilon^{-1} \left( \tau^{2} + h^{3+2\alpha}\right) 
+ \epsilon \Vert  e_{u,h}^{n-1}\Vert_{H^{1}(\Omega)}^{2} 
+ C \epsilon^{-1} \Vert e_{\rho, h}^{n} \Vert_{L^{2}(\Omega)}^{2}. 
\end{align*}
By choosing $\epsilon = 1$ and applying Gr{\"{o}}nwall's inequality, we have 
\begin{align}
\label{rho_true_error}
\nonumber
& \max_{1\le n \le m} \Vert e_{\rho, h}^{n} \Vert_{L^{2}(\Omega)}^{2} 
+ \sum_{n=1}^{m} \tau 
\sum_{F\in \mathcal{F}_{h}^{I}} \Vert 
\vert (P_h^{\rm RT}u_{h}^{n-1})\cdot \Vn_{F}\vert^{\frac{1}{2}} 
\lb e_{\rho, h}^{n} \rb \Vert_{L^{2}(F)}^{2}  \\ 
\nonumber
& 
\le 
C \Vert e_{\rho, h}^{0} \Vert_{L^{2}(\Omega)}^{2}
+ C \big( \tau^{2} + h^{3+2\alpha} \big) 
+ C \sum_{n=1}^{m} \tau \Vert  e_{u,h}^{n-1}\Vert_{H^{1}(\Omega)}^{2} \\ 
& 
\le  C \big( \tau^{2} + h^{3+2\alpha} \big) 
+ C \sum_{n=1}^{m} \tau \Vert  e_{u,h}^{n-1}\Vert_{H^{1}(\Omega)}^{2} .
\end{align}
By the last inequality and the induction assumption (\ref{pre_step_error4}), we have 
\begin{subequations}
\label{rho_errors}
\begin{align}
\label{rho_error1}
 \max_{1\le n \le m} \Vert e_{\rho, h}^{n} \Vert_{L^{2}(\Omega)} 
&\le C \big(\tau + h^{\frac{3}{2}+\alpha} + (\kappa^{\frac12} +h^{\frac{\alpha}{2}})h^{\frac{3}{2}}\big) , \\
\label{rho_error2}
 \max_{1\le n \le m} \Vert e_{\rho, h}^{n} \Vert_{L^{\infty}(\Omega)} 
&\le C h^{-\frac{d}{2}}  \max_{1\le n \le m} \Vert e_{\rho, h}^{n} \Vert_{L^{2}(\Omega)} \notag \\
&
\le C  \big(h^{-\frac{d}{2}}\tau + h^{\alpha} + \kappa^{\frac12} +h^{\frac{\alpha}{2}} \big).
\end{align}
\end{subequations}
Since all the constants $C$ above are independent of $\kappa$, by choosing a sufficiently small $\kappa$ the inequality \eqref{rho_error2} implies 
\begin{align}
\label{proved_now_step_error1}
\max_{1\le n \le m} \Vert e_{\rho, h}^{n} \Vert_{L^{\infty}(\Omega)} \le \frac{1}{4} 
\rho_{\rm min} 
\end{align}
when
\begin{align}\label{grid-ratio-1}
\tau\le \kappa h^{\frac{d}{2}} 
\,\,\,\mbox{and}\,\,\,
\mbox{$h$ is sufficiently small}.
\end{align}
In this case, 
\begin{align}
\label{rho_inf_sup_now}
\frac{1}{2}\rho_{\rm min} \le 
 \rho_{h}^{m}(x) 
\le \frac{3}{2}\rho_{\rm max}.
\end{align} 
As a result,  
\begin{align}\label{chi-rho_h-n}
\chi(\rho_h^n)=\rho_h^n\quad\mbox{for}\,\,\, 0\le n\le m . 
\end{align} 
From now on we will remove the cut-off function $\chi$ on $\rho_h^n$. 

\subsection{Estimates for $D_{\tau} e_{\rho, h}^{n}$}

We estimate $\Vert D_{\tau} e_{\rho, h}^{n} \Vert_{L^{2}(\Omega)}$ by substituting $\varphi_{h} 
= D_{\tau} e_{\rho, h}^{n}$ in (\ref{DNS_error_eq1}). Since 
\begin{align*}
E_1^n (D_{\tau} e_{\rho, h}^{n}) 
=
(D_{\tau}\rho_{\tau}^{n} - P_h^{\rm dG}(D_{\tau}\rho_{\tau}^{n}), D_{\tau} e_{\rho, h}^{n}) = 0, 
\end{align*}
we have 
\begin{align} \label{Error-Dtau-e-rho-1}
\Vert D_{\tau}e_{\rho, h}^{n} \Vert_{L^{2}(\Omega)}^{2} 
&\le
\sum_{j=2}^6 |E_j^n(D_\tau e_{\rho,h}^n)|
\nonumber  \\
&\quad 
+\vert  ((P_h^{\rm RT}u_{h}^{n-1})\cdot \nabla e_{\rho, h}^{n}, 
D_{\tau}e_{\rho, h}^{n}) \vert  + \Big| \sum_{K \in \CT_{h}} \langle  P_h^{\rm RT} u_{h}^{n-1}  \cdot 
\lb e_{\rho,h}^{n} \rb, D_{\tau} e_{\rho, h}^{n} \rangle_{\partial K_{-}^{n}} \Big| 
\nonumber \\
&=:
\sum_{j=2}^8 E_j^n(D_\tau e_{\rho,h}^n) .
\end{align}
By using the inverse and trace inequalities, we have
\begin{align*}
|E_2^n(D_\tau e_{\rho,h}^n)| 
& \le \Vert P_h^{\rm RT} u_{h}^{n-1} \Vert_{L^{\infty}(\Omega)} 
\Vert \rho^{n} - P_h^{\rm dG}\rho^{n}\Vert_{L^{2}(\Omega)} 
 Ch^{-1} \Vert D_\tau e_{\rho,h}^n \Vert_{L^{2}(\Omega)}  \\
& \le C h^{-1} \Vert \rho^{n} - P_h^{\rm dG}\rho^{n}\Vert_{L^{2}(\Omega)} 
\Vert D_\tau e_{\rho,h}^n \Vert_{L^{2}(\Omega)} \\
& \le C h \Vert \rho^{n} \Vert_{H^{2}(\Omega)} 
\Vert D_\tau e_{\rho,h}^n \Vert_{L^{2}(\Omega)}  ,\\[5pt]
|E_3^n(D_\tau e_{\rho,h}^n)| 
& \le \Vert P_h^{\rm RT} u_{h}^{n-1} \Vert_{L^{\infty}(\Omega)} 
\left(C h^{-\frac12}  \Vert \rho^{n} - P_h^{\rm dG}\rho^{n}\Vert_{L^{2}(\Omega)} 
+ h^{\frac12}  \Vert \nabla (\rho^{n} - P_h^{\rm dG}\rho^{n}) \Vert_{L^{2}(\Omega)}\right) \\
&\quad\,\,\cdot 
C h^{-\frac12} \Vert D_\tau e_{\rho,h}^n \Vert_{L^{2}(\Omega)} \\
&\le C h^{-1} 
\left( \Vert \rho^{n} - P_h^{\rm dG}\rho^{n}\Vert_{L^{2}(\Omega)} 
+ h \Vert \nabla (\rho^{n} - P_h^{\rm dG}\rho^{n}) \Vert_{L^{2}(\CT_{h})}  \right)  
\Vert D_\tau e_{\rho,h}^n \Vert_{L^{2}(\Omega)} \\
& \le C h \Vert \rho^{n} \Vert_{H^{2}(\Omega)} \Vert D_\tau e_{\rho,h}^n \Vert_{L^{2}(\Omega)} ,\\[5pt]
|E_6^n(D_\tau e_{\rho,h}^n)| 
& \le 
\big( \Vert D_{\tau}\rho^{n} - \partial_{t} \rho^{n} \Vert_{L^{2}(\Omega)} 
+ \Vert (\Vu^{n-1} -  u^{n})\cdot \nabla \rho^{n} \Vert_{L^{2}(\Omega)} \big)
\Vert D_\tau e_{\rho,h}^n \Vert_{L^{2}(\Omega)} \\
&\le 
C\tau \big(\|\partial_t^2\rho\|_{L^2(\Omega)}
+\|\partial_t u\|_{L^2(\Omega)}
\|\rho\|_{H^2(\Omega)} \big) \Vert D_\tau e_{\rho,h}^n \Vert_{L^{2}(\Omega)} , \\[10pt]
|E_7^n(D_\tau e_{\rho,h}^n)| 
& \le Ch^{-1} \Vert P_h^{\rm RT} u_{h}^{n-1} \Vert_{L^{\infty}(\Omega)} 
\Vert e_{\rho,h}^{n}\Vert_{L^{2}(\Omega)} \Vert D_\tau e_{\rho,h}^n \Vert_{L^{2}(\Omega)} \\
&\le 
Ch^{-1} 
\Vert e_{\rho,h}^{n}\Vert_{L^{2}(\Omega)} 
\Vert D_\tau e_{\rho,h}^n \Vert_{L^{2}(\Omega)}  , \\[5pt]
|E_8^n(D_\tau e_{\rho,h}^n)| 
& \le \Vert P_h^{\rm RT} u_{h}^{n-1} \Vert_{L^{\infty}(\Omega)} 
Ch^{-\frac12} 
\Vert e_{\rho,h}^{n}\Vert_{L^{2}(\Omega)} 
Ch^{-\frac12} 
\Vert D_\tau e_{\rho,h}^n \Vert_{L^{2}(\Omega)} \\
& \le 
Ch^{-1} 
\Vert e_{\rho,h}^{n}\Vert_{L^{2}(\Omega)} 
\Vert D_\tau e_{\rho,h}^n \Vert_{L^{2}(\Omega)} , 
\end{align*}
and
\begin{align*}
&|E_4^n(D_\tau e_{\rho,h}^n)| + |E_5^n(D_\tau e_{\rho,h}^n)| \\
& \le C \Vert \nabla\rho^{n}\Vert_{L^6(\Omega)} \left( 
\Vert \Vu^{n-1} - P_h^{\rm RT}\Vu^{n-1}\Vert_{L^{3}(\Omega)} 
+ \Vert P_h^{\rm RT}(\Vu^{n-1} - u_{h}^{n-1})\Vert_{L^{3}(\Omega)} \right) \Vert D_\tau e_{\rho,h}^n \Vert_{L^{2}(\Omega)} \\
& \le C \left( 
\Vert \Vu^{n-1} - P_h^{\rm RT}\Vu^{n-1}\Vert_{L^{3}(\Omega)} 
+ \Vert P_h^{\rm RT}(u^{n-1}-\widehat u_h^{n-1})\Vert_{L^{3}(\Omega)} 
+ \Vert P_h^{\rm RT} e_{u,h}^{n-1} \Vert_{L^{3}(\Omega)} \right) \Vert D_\tau e_{\rho,h}^n \Vert_{L^{2}(\Omega)} \\ 
& \le 
C \big( h^{2-\frac{d}{6}}
+h^{-\frac{d}{6}} \Vert P_h^{\rm RT} e_{u,h}^{n-1} \Vert_{L^2(\Omega)} \big)
\Vert D_\tau e_{\rho,h}^n \Vert_{L^{2}(\Omega)} \\
& \le 
C \big( h^{2-\frac{d}{6}}
+h^{-\frac{d}{6}} \Vert e_{u,h}^{n-1} \Vert_{L^2(\Omega)} \big)
\Vert D_\tau e_{\rho,h}^n \Vert_{L^{2}(\Omega)} . 
\end{align*}
Substituting the estimates of $E_j^n(D_\tau e_{\rho,h}^n)$, $j=2,\dots,8$, into \eqref{Error-Dtau-e-rho-1}, we obtain 
\begin{align*} 
\Vert D_{\tau}e_{\rho, h}^{n} \Vert_{L^{2}(\Omega)}  
\le 
C h^{-1} \big( \Vert e_{\rho,h}^{n} \Vert_{L^{2}(\Omega)}
+ \Vert  e_{u,h}^{n-1} \Vert_{L^{2}(\Omega)} \big) + C(\tau + h).
\end{align*}
By using \eqref{pre_step_error2} and \eqref{rho_error1},  we have 
\begin{align}
\label{D_t_rho_error}
\Vert D_{\tau} e_{\rho, h}^{n} \Vert_{L^{2}(\Omega)}  
\le C h^{-1}\big(\tau+ h^{\frac32+\alpha} +\kappa^{\frac12} h^{\frac32}
+h^{\frac{3}{2}+ \frac{\alpha}{2}} + \tau^{\frac{5}{6}} \big) . 
\end{align}

The following estimate of $D_{\tau}(\rho^{n} - \rho_{h}^{n})$ is also needed in our error estimation for velocity. 
\begin{lemma}
\label{lemma_dtau_rho_h_minus}
The following inequality holds:
\begin{align*}
& \vert  (D_{\tau} e_{\rho,h}^{n}, \varphi_{h}) \vert  
\le C \Vert \varphi_{h} \Vert_{H^{1}(\CT_{h})} 
(\Vert e_{\rho, h}^{n} \Vert_{L^{2}(\Omega)}
 + \Vert  e_{u,h}^{n-1}\Vert_{L^{2}(\Omega)} + \tau  + h^{2}\big)
\quad\forall\, \varphi_{h} \in {\rm P}^2_{\rm dG}(\CT_{h}) .
\end{align*}
\end{lemma}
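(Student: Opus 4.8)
The plan is to go back to the density error identity \eqref{DNS_error_eq1} and test it with the given $\varphi_h$ itself (not with $e_{\rho,h}^n$ or $D_\tau e_{\rho,h}^n$); moving the two convective contributions to the right-hand side, this reads
\begin{align*}
(D_{\tau} e_{\rho, h}^{n}, \varphi_{h}) = \sum_{j=1}^{6} E_j^n(\varphi_h) - T^n(\varphi_h) ,
\end{align*}
where
\begin{align*}
T^n(\varphi_h) := (P_h^{\rm RT}u_{h}^{n-1}\cdot \nabla e_{\rho, h}^{n}, \varphi_{h}) - \sum_{K \in \CT_{h}} \langle P_h^{\rm RT} u_{h}^{n-1} \cdot \lb e_{\rho,h}^{n} \rb, \varphi_{h} \rangle_{\partial K_{-}^{n}} .
\end{align*}
It then suffices to bound each of the seven terms by $C\Vert\varphi_h\Vert_{H^1(\CT_h)}$ times one of $\Vert e_{\rho,h}^n\Vert_{L^2(\Omega)}$, $\Vert e_{u,h}^{n-1}\Vert_{L^2(\Omega)}$, $\tau$, $h^2$. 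Throughout one uses the $L^\infty$-bound \eqref{postprocessing_u_inf_sup_pre} on $P_h^{\rm RT}u_h^{n-1}$, the properties $\nabla\cdot(P_h^{\rm RT}u_h^{n-1})=0$, $(P_h^{\rm RT}u_h^{n-1})\cdot\nu=0$ on $\partial\Omega$ and $P_h^{\rm RT}u_h^{n-1}\in{\rm P}^1(\CT_h)^d$, and the discrete Poincar\'e--Sobolev inequalities $\Vert\varphi_h\Vert_{L^p(\Omega)}\le C\Vert\varphi_h\Vert_{H^1(\CT_h)}$ for $1\le p\le 6$ when $d\le3$ (standard for broken polynomial spaces, cf.\ \cite{DiPietroErn2012}).

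For the lower-order terms, $E_1^n(\varphi_h)=E_2^n(\varphi_h)=0$ exactly as in \eqref{E2n-e-rho}: the first because $P_h^{\rm dG}$ commutes with $D_\tau$ and is the $L^2$-projection, the second because $P_h^{\rm RT}u_h^{n-1}\cdot\nabla\varphi_h$ is piecewise of degree $\le 2$ and hence $L^2$-orthogonal to $\rho^n-P_h^{\rm dG}\rho^n$ by \eqref{L2-Proj-dG}. For $E_3^n(\varphi_h)$ I would rewrite it as a sum over interior faces as in \eqref{interface_term_bound} (boundary faces vanish since $(P_h^{\rm RT}u_h^{n-1})\cdot\nu=0$ there), factor out $\Vert P_h^{\rm RT}u_h^{n-1}\Vert_{L^\infty(\Omega)}$, insert $h_F^{1/2}\cdot h_F^{-1/2}$ and apply Cauchy--Schwarz over faces; the trace inequality and the $L^2$/$H^1$ approximation of $P_h^{\rm dG}$ give $\sum_F h_F\Vert\rho^n-\widehat{P_h^{\rm dG}\rho^n}\Vert_{L^2(F)}^2\le Ch^4\Vert\rho^n\Vert_{H^2(\Omega)}^2$, hence $|E_3^n(\varphi_h)|\le Ch^2\Vert\varphi_h\Vert_{H^1(\CT_h)}$. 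For $E_4^n$ and $E_5^n$ I would use H\"older's inequality with the exponent triple $(2,6,3)$: $\Vert\nabla\rho^n\Vert_{L^6(\Omega)}\le C\Vert\rho^n\Vert_{H^2(\Omega)}$ by the Sobolev embedding $H^{1}(\Omega)\hookrightarrow L^6(\Omega)$, $\Vert\varphi_h\Vert_{L^3(\Omega)}\le C\Vert\varphi_h\Vert_{H^1(\CT_h)}$, and the remaining $L^2$-factor is $\Vert u^{n-1}-P_h^{\rm RT}u^{n-1}\Vert_{L^2(\Omega)}\le Ch^2$ by \eqref{postprocessing_appr} for $E_4^n$, and (after splitting $u^{n-1}-u_h^{n-1}=(u^{n-1}-\widehat u_h^{n-1})+e_{u,h}^{n-1}$, using the $L^2$-stability of $P_h^{\rm RT}$ and \eqref{assumption_stokes_proj}) bounded by $C(h^2+\Vert e_{u,h}^{n-1}\Vert_{L^2(\Omega)})$ for $E_5^n$. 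Finally $E_6^n(\varphi_h)=(R_\rho^n,\varphi_h)$ with $\Vert R_\rho^n\Vert_{L^2(\Omega)}\le C\tau$ (Taylor expansion for $D_\tau\rho^n-\partial_t\rho^n$, and $H^2(\Omega)\hookrightarrow L^\infty(\Omega)$ applied to $u^{n-1}-u^n$), so $|E_6^n(\varphi_h)|\le C\tau\Vert\varphi_h\Vert_{L^2(\Omega)}\le C\tau\Vert\varphi_h\Vert_{H^1(\CT_h)}$.

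The main obstacle is $T^n(\varphi_h)$: since $\Vert\nabla e_{\rho,h}^n\Vert_{L^2(\Omega)}$ is only controlled by the inverse estimate $Ch^{-1}\Vert e_{\rho,h}^n\Vert_{L^2(\Omega)}$, which loses a power of $h$, $T^n(\varphi_h)$ cannot be estimated termwise. Instead, using $\nabla\cdot(P_h^{\rm RT}u_h^{n-1})=0$, I would integrate by parts on each $K$ to move the gradient off $e_{\rho,h}^n$, which produces a volume term $-(P_h^{\rm RT}u_h^{n-1}\,e_{\rho,h}^n,\nabla\varphi_h)_K$ and an interior-trace boundary term $\langle(P_h^{\rm RT}u_h^{n-1}\cdot\nu_K)e_{\rho,h}^n,\varphi_h\rangle_{\partial K}$. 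Subtracting the upwind flux and splitting $\partial K=\partial K_+^n\cup\partial K_-^n$ --- the same algebraic bookkeeping that yielded \eqref{rho-test-identity}--\eqref{dg_energy}, now carried out with two distinct functions --- the boundary term and the upwind flux combine into $\langle(P_h^{\rm RT}u_h^{n-1}\cdot\nu_K)\widehat{e_{\rho,h}^n},\varphi_h\rangle_{\partial K}$ with $\widehat{e_{\rho,h}^n}$ the \emph{single-valued} upwind trace of $e_{\rho,h}^n$, so that
\begin{align*}
T^n(\varphi_h) = -(P_h^{\rm RT}u_h^{n-1}\,e_{\rho,h}^n,\nabla\varphi_h) + \sum_{F\in\CF_h^I}\big\langle (P_h^{\rm RT}u_h^{n-1}\cdot\Vn_F)\,\widehat{e_{\rho,h}^n}\,,\,\lb\varphi_h\rb\cdot\Vn_F\big\rangle_F ,
\end{align*}
the boundary faces again contributing nothing. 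The volume term is $\le\Vert P_h^{\rm RT}u_h^{n-1}\Vert_{L^\infty(\Omega)}\Vert e_{\rho,h}^n\Vert_{L^2(\Omega)}\Vert\nabla\varphi_h\Vert_{L^2(\CT_h)}\le C\Vert e_{\rho,h}^n\Vert_{L^2(\Omega)}\Vert\varphi_h\Vert_{H^1(\CT_h)}$, and the face sum is handled --- after factoring out $\Vert P_h^{\rm RT}u_h^{n-1}\Vert_{L^\infty(\Omega)}$ and using the discrete trace inequality together with Cauchy--Schwarz over faces --- by $C\Vert e_{\rho,h}^n\Vert_{L^2(\Omega)}\Vert\varphi_h\Vert_{H^1(\CT_h)}$. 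Collecting the seven bounds proves the lemma. The only delicate point is this face cancellation in $T^n(\varphi_h)$; everything else reuses the H\"older/trace/inverse/approximation estimates already employed above for $E_j^n(e_{\rho,h}^n)$ and $E_j^n(D_\tau e_{\rho,h}^n)$.
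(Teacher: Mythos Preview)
Your proposal is correct and follows essentially the same route as the paper: both test \eqref{DNS_error_eq1} with $\varphi_h$, and the crucial step --- integrating the convective term $T^n(\varphi_h)$ by parts elementwise so that the gradient lands on $\varphi_h$ and the face terms combine into the single-valued upwind trace $\widehat{e_{\rho,h}^n}$ --- is exactly the manipulation the paper performs in \eqref{Dtau-rho-rhoh-H-1} and estimates in \eqref{E7-e-rho-H-1}--\eqref{E8-e-rho-H-1}. Your treatment of the lower-order terms is slightly more streamlined than the paper's (you correctly observe $E_2^n(\varphi_h)=0$, whereas the paper merely bounds it by $Ch^2\Vert\varphi_h\Vert_{H^1(\CT_h)}$; and for $E_4^n,E_5^n$ you use H\"older $(2,6,3)$ and the broken Sobolev embedding directly, while the paper integrates by parts and invokes $\Vert\rho^n\Vert_{L^\infty}$), but these are cosmetic differences.
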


\begin{proof}
According to (\ref{DNS_error_eq1}), we have 
\begin{align}\label{Dtau-rho-rhoh-H-1}
(D_{\tau}(\rho^{n} - \rho_{h}^{n}), \varphi_{h}) 
&=\sum_{j=2}^6 E_j^n(\varphi_h) 
-(P_h^{\rm RT}u_{h}^{n-1}\cdot \nabla e_{\rho, h}^{n}, \varphi_{h})  + \sum_{K \in \CT_{h}} \langle  P_h^{\rm RT} u_{h}^{n-1}  \cdot 
\lb e_{\rho,h}^{n} \rb, \varphi_{h} \rangle_{\partial K_{-}^{n}} \nonumber \\ 
&=\sum_{j=2}^6 E_j^n(\varphi_h) 
+(P_h^{\rm RT}u_{h}^{n-1}\cdot \nabla \varphi_{h} , e_{\rho, h}^{n} )  
- \sum_{K \in \CT_{h}} \langle  P_h^{\rm RT} u_{h}^{n-1} \cdot\nu_K\, 
\widehat e_{\rho,h}^{n} , \varphi_{h} \rangle_{\partial K}  ,
\end{align}
where we have used integration by parts, and $\widehat e_{\rho,h}^{n}$ denotes the value of $e_{\rho,h}^{n}$ from the influx side on a face $F\subset\partial K$. 

By the definition of the $L^2$-projection $P_h^{\rm dG}$, we have 
\begin{align} \label{E2n-e-rho-2} 
|E_2^n(\varphi_h)| 
& = |(P_h^{\rm RT}u_{h}^{n-1}  (\rho^{n} - P_h^{\rm dG}\rho^{n}),\nabla\varphi_h)| \nonumber \\
& \le C \Vert P_h^{\rm RT} u_h^{n-1}\Vert_{L^{\infty}(\Omega)} 
\Vert \rho^{n} - P_h^{\rm dG}\rho^{n}\Vert_{L^{2}(\Omega)} 
\Vert \varphi_h \Vert_{H^1(\CT_h)} \nonumber \\
& \le C h^2\Vert \rho^{n} \Vert_{H^{2}(\Omega)} 
\Vert \varphi_h \Vert_{H^1(\CT_h)} .
\end{align}

Similarly as the estimates in \eqref{interface_term_bound}, we have 
\begin{align} \label{interface_term_bound-2}
|E_3^n(\varphi_h)|
& = \Big\vert \sum_{F \in \mathcal{F}_{h}^{I}} \big\langle   
P_h^{\rm RT} u_{h}^{n-1}  
(\rho^{n} - \widehat{P_h^{\rm dG}\rho^{n}}) , \lb \varphi_{h} \rb \big\rangle_{F} \Big\vert \nonumber \\
\nonumber 
& \le \left( \sum_{F \in \mathcal{F}_{h}^{I}}  h_F \Vert  P_h^{\rm RT} u_{h}^{n-1}  (\rho^{n} - \widehat{P_h^{\rm dG}\rho^{n}}) 
\Vert_{L^{2}(F)}^{2} \right)^{\frac{1}{2}} \cdot 
\left( \sum_{F \in \mathcal{F}_{h}^{I}} h_{F}^{-1}\Vert 
\lb \varphi_{h} \rb \Vert_{L^{2}(F)}^{2} \right)^{\frac{1}{2}}\\
\nonumber
& \le C\Big( \sum_{K \in \CT_h}  
\Vert P_h^{\rm RT} u_{h}^{n-1} \Vert_{L^\infty(K)}^2 
(\Vert   \rho^{n} - P_h^{\rm dG}\rho^{n} 
\Vert_{L^{2}(K)}^{2} + h^{2}\Vert   \rho^{n} - P_h^{\rm dG}\rho^{n} 
\Vert_{H^{1}(K)}^{2} ) \Big)^{\frac{1}{2}} \\ 
\nonumber
& \qquad \qquad  \cdot  \Big( \sum_{F \in \mathcal{F}_{h}^{I}} h_{F}^{-1}\Vert 
\lb \varphi_{h} \rb \Vert_{L^{2}(F)}^{2}  \Big)^{\frac{1}{2}} \nonumber  \\
& \le C 
(\Vert \rho^{n} - P_h^{\rm dG}\rho^{n}
\Vert_{L^{2}(\Omega)} 
+h\Vert \rho^{n} - P_h^{\rm dG}\rho^{n}
\Vert_{H^{1}(\Omega)}) \cdot 
 \Big( \sum_{F \in \mathcal{F}_{h}^{I}} h_{F}^{-1}\Vert 
\lb \varphi_{h} \rb \Vert_{L^{2}(F)}^{2}  \Big)^{\frac{1}{2}} \nonumber \\
& \le Ch^2 \Vert \rho^{n} \Vert_{H^{2}(\Omega)}  \Vert 
\varphi_{h} \Vert_{H^1(\CT_h)} , 
\end{align}
where the last inequality uses definition \eqref{H1_discrete} of the norm $\Vert \varphi_{h} \Vert_{H^1(\CT_h)}$. 

By using integration by parts in $E_4^n(e_{\rho, h}^{n})$ and $E_5^n(e_{\rho, h}^{n})$, we have 
\begin{align}
|E_4^n(e_{\rho, h}^{n})| 
& =  \vert  ((u^{n-1} - P_h^{\rm RT}u^{n-1})\cdot 
\nabla \rho^{n}, \varphi_h) \vert  \nonumber \\
& =  \Big\vert  - ( \rho^{n}, (u^{n-1} - P_h^{\rm RT}u^{n-1})\cdot 
\nabla  \varphi_h) 
+ \sum_{F \in \mathcal{F}_{h}^{I}}((u^{n-1} - P_h^{\rm RT}u^{n-1})\,\rho^{n}, \lb \varphi_h \rb) \Big\vert  \nonumber \\
& \le \|\rho^n\|_{L^\infty(\Omega)} \Vert u^{n-1} - P_h^{\rm RT} u^{n-1}\Vert_{L^2(\Omega)} 
\Vert \nabla \varphi_h \Vert_{L^2(\Omega)} \nonumber \\
&\,\quad
+   
\|\rho^n\|_{L^\infty(\Omega)}
\bigg(\sum_{F \in \mathcal{F}_{h}^{I}} h_F \Vert \Vu^{n-1} - P_h^{\rm RT}u^{n-1}\Vert_{L^2(F)}^2\bigg)^{\frac12}
\bigg(\sum_{F \in \mathcal{F}_{h}^{I}} h_F^{-1}\Vert \lb \varphi_h \rb \Vert_{L^2(F)}^2\bigg)^{\frac12} \nonumber \\
& \le 
C 
(\Vert u^{n-1} - P_h^{\rm RT} u^{n-1}\Vert_{L^2(\Omega)} 
+ h \Vert u^{n-1} - P_h^{\rm RT} u^{n-1}\Vert_{H^1(\Omega)} )
\Vert \varphi_h \Vert_{H^1(\CT_h)}  \nonumber \\
& \le 
C h^2 \Vert u^{n-1} \Vert_{H^2(\Omega)} 
\Vert \varphi_h \Vert_{H^1(\CT_h)} , \\[10pt]
|E_5^n(e_{\rho, h}^{n})| 
& =  \vert  ((P_h^{\rm RT}u^{n-1} - P_h^{\rm RT}u_h^{n-1})\cdot 
\nabla \rho^{n}, \varphi_h) \vert  \nonumber \\
& =  \Big\vert  - ( \rho^{n}, (P_h^{\rm RT}u^{n-1} - P_h^{\rm RT}u_h^{n-1})\cdot 
\nabla  \varphi_h) 
+ \sum_{F \in \mathcal{F}_{h}^{I}}((P_h^{\rm RT}u^{n-1} - P_h^{\rm RT}u_h^{n-1})\,\rho^{n}, \lb \varphi_h \rb) \Big\vert  \nonumber \\
& \le \|\rho^n\|_{L^\infty(\Omega)} \Vert P_h^{\rm RT}u^{n-1} - P_h^{\rm RT}u_h^{n-1} \Vert_{L^2(\Omega)} 
\Vert \nabla \varphi_h \Vert_{L^2(\Omega)} \nonumber \\
&\,\quad
+   
\|\rho^n\|_{L^\infty(\Omega)}
\bigg(\sum_{F \in \mathcal{F}_{h}^{I}} h_F \Vert P_h^{\rm RT}u^{n-1} - P_h^{\rm RT}u_h^{n-1}\Vert_{L^2(F)}^2\bigg)^{\frac12}
\bigg(\sum_{F \in \mathcal{F}_{h}^{I}} h_F^{-1}\Vert \lb \varphi_h \rb \Vert_{L^2(F)}^2\bigg)^{\frac12} \nonumber \\
& \le 
C \Vert P_h^{\rm RT}u^{n-1} - P_h^{\rm RT}u_h^{n-1} \Vert_{L^2(\Omega)}  
\Vert \varphi_h \Vert_{H^1(\CT_h)} 
\quad\mbox{(inverse trace inequality)}
\nonumber \\
& \le 
C \big( \Vert P_h^{\rm RT}(u^{n-1} -\widehat u_h^{n-1}) \Vert_{L^2(\Omega)} 
+ \Vert e_{u,h}^{n-1} \Vert_{L^2(\Omega)} \big) 
\Vert \varphi_h \Vert_{H^1(\CT_h)} 
\quad\mbox{(triangle inequality)}\nonumber \\
& \le 
C \big( h^2  
+ \Vert e_{u,h}^{n-1} \Vert_{L^2(\Omega)} \big) 
\Vert \varphi_h \Vert_{H^1(\CT_h)} .
\quad\mbox{(inequality \eqref{assumption_stokes_proj} is used)} 
\end{align}

The term $E_6^n(e_{\rho, h}^{n})$ can be estimated in the same way as \eqref{E7n-e-rho}, i.e., 
\begin{align}
|E_6^n(e_{\rho, h}^{n})| 
& \le 
C \tau 
\big( \Vert \partial_{t}^{2} \rho  \Vert_{L^{2}(\Omega)} 
+ \Vert \partial_{t} \Vu \Vert_{L^{3}(\Omega)} \|\nabla \rho\|_{L^{6}(\Omega)}\big) \Vert e_{\rho, h}^{n} \Vert_{L^{2}(\Omega)}  
\le
C \tau \Vert e_{\rho, h}^{n} \Vert_{L^{2}(\Omega)} . 
\label{E7n-e-rho-2}
\end{align}

The last two terms in \eqref{Dtau-rho-rhoh-H-1} can be estimated by 
\begin{align}\label{E7-e-rho-H-1}
&\big|(P_h^{\rm RT}u_{h}^{n-1}\cdot \nabla \varphi_{h} , e_{\rho, h}^{n} ) \big|
\le
C \|e_{\rho, h}^{n}\|_{L^2(\Omega)}
\|\varphi_{h}\|_{H^1(\CT_h)} 
\end{align}
and
\begin{align}\label{E8-e-rho-H-1}
&\Big|\sum_{K \in \CT_{h}} \langle  P_h^{\rm RT} u_{h}^{n-1} \cdot\nu_K\, 
\widehat e_{\rho,h}^{n} , \varphi_{h} \rangle_{\partial K} \Big| \nonumber \\
& = \Big|\sum_{F \in \mathcal{F}_{h}^{I}} \langle  P_h^{\rm RT} u_{h}^{n-1}  \, 
\widehat e_{\rho,h}^{n} , \lb \varphi_{h} \rb \rangle_{F} \Big| \nonumber \\
& \le 
\Vert P_h^{\rm RT} u_{h}^{n-1} \Vert_{L^\infty(\Omega)}
\bigg(\sum_{F \in \mathcal{F}_{h}^{I}} 
h_F\Vert \widehat e_{\rho,h}^{n} \Vert_{L^2(F)}^2 \bigg)^{\frac12}
\bigg(\sum_{F \in \mathcal{F}_{h}^{I}} 
h_F^{-1}\Vert \lb \varphi_{h} \rb \Vert_{L^2(F)}^2 \bigg)^{\frac12} \nonumber \\
& \le 
C
\bigg(\sum_{K \in \CT_h} 
\Vert e_{\rho,h}^{n} \Vert_{L^2(K)}^2 \bigg)^{\frac12}
\bigg(\sum_{F \in \mathcal{F}_{h}^{I}} 
h_F^{-1}\Vert \lb \varphi_{h} \rb \Vert_{L^2(F)}^2 \bigg)^{\frac12}
\quad\mbox{(inverse trace inequality)} \nonumber \\
& \le 
C \Vert e_{\rho,h}^{n} \Vert_{L^2(\Omega)}
\Vert \varphi_{h} \Vert_{H^1(\CT_h)} .
\end{align}

Substituting \eqref{E2n-e-rho-2}--\eqref{E8-e-rho-H-1} into \eqref{Dtau-rho-rhoh-H-1} yields the desired result of Lemma \ref{lemma_dtau_rho_h_minus}. 
\end{proof}

\subsection{Estimates for $ e_{u,h}^{n}$}

From (\ref{DNS_eq2}) one can see that the exact solution $u^n$ satisfies the equation
\begin{align}
\label{DNS_variational_eq2} 
& (\rho^{n-1} D_{\tau}  u^{n},  v_{h}) + \frac{1}{2}((D_{\tau}\rho^{n}) u^{n},  v_{h})  
+ \frac{1}{2} ( ( \rho^{n} \Vu^{n-1} \cdot \nabla) u^{n},  v_{h}) \\
\nonumber
& \qquad - \frac{1}{2} ( ( \rho^{n}\Vu^{n-1}\cdot \nabla)  v_{h},  u^{n}) 
 + (\mu \nabla  u^{n}, \nabla  v_{h}) - (p^{n}, \nabla\cdot  v_{h})  \\
& = (R_u^n,v_h) 
\qquad\forall\, v_{h} \in \mathring{\rm P}^{\rm 1b}(\CT_{h})^d ,
\nonumber
\end{align}
with a defect $R_u^n$, which has the following expression: 
\begin{align}\label{def-R_u^n}
R_u^n
& =
(\rho^{n-1} - \rho^{n}) D_{\tau}  u^{n}  
+ \rho^{n} (D_{\tau}  u^{n} - \partial_{t}  u^{n})  
 + \frac{1}{2}(D_{\tau}\rho^{n} - \partial_{t}\rho^{n}) u^{n} 
+ \rho^{n} (\Vu^{n-1} -  u^{n}) \cdot \nabla  u^{n} .
\end{align}
Under the regularity assumption \eqref{sol_reg}, we have 
\begin{align}\label{Estimate-R_u^n}
\|R_u^n\|_{L^2(\Omega)}\le C\tau . 
\end{align}
We also note that equation (\ref{DNS_FEM_eq2}) can be rewritten as (removing the cut-off function $\chi$ 
in view of \eqref{chi-rho_h-n}) 
\begin{align*}
& (\rho_{h}^{n-1} D_{\tau}u_{h}^{n},  v_{h}) 
+ \frac{1}{2} ( D_{\tau} \rho_{h}^{n} u_{h}^{n},  v_{h}) 
+ \frac{1}{2}( (\rho_{h}^{n} u_{h}^{n-1} \cdot \nabla) u_{h}^{n},  v_{h}) \\
&\quad\,\,\, - \frac{1}{2}( (\rho_{h}^{n} u_{h}^{n-1} \cdot \nabla)  v_{h}, u_{h}^{n}) 
+ (\mu \nabla u_{h}^{n}, \nabla  v_{h}) - (p_{h}^{n}, \nabla\cdot  v_{h}) = 0
\end{align*}
Subtracting the above equations from (\ref{DNS_variational_eq2}) yields 
\begin{align}\label{Error_Eq_p}
& \big[ ( \rho_{h}^{n-1}D_{\tau}  e_{u,h}^{n},  v_{h}) 
+ (\rho_{h}^{n-1} D_{\tau} ( u^{n} - \widehat u_h^n),  v_{h} ) \notag \\ 
&\,\quad  + ((\rho^{n-1} - \rho_{h}^{n-1})D_{\tau}  u^{n},  v_{h} ) \big] \notag \\ 
&\,\quad + \frac{1}{2} \big[ (D_{\tau} \rho_h^{n} e_{u,h}^{n},  v_{h})
+(D_{\tau}(\rho^{n}-\rho_h^{n}) e_{u,h}^{n},  v_{h}) 
+ (D_{\tau}\rho^{n} ( u^{n} - \widehat u_h^n),  v_{h} ) \big] \notag \\
&\,\quad + \frac{1}{2} (D_{\tau}(\rho^{n} - \rho_{h}^{n} ) u_{h}^{n},  v_{h} )   \notag \\ 
&\,\quad + \frac{1}{2} \big[ (  ( \rho_{h}^{n} u_{h}^{n-1} \cdot \nabla )  e_{u,h}^{n},  v_{h})
+ ( ( \rho_{h}^{n} u_{h}^{n-1}\cdot\nabla) ( u^{n}-\widehat u_h^n), v_{h}) \big] \notag \\ 
&\,\quad + \frac{1}{2} \big[ ( ( \rho_{h}^{n} (\Vu^{n-1}-\widehat u_h^{n-1})
\cdot\nabla ) u^{n}, v_{h}) 
+ ( ( \rho_{h}^{n} e_{u,h}^{n-1} \cdot \nabla ) u^{n},  v_{h} ) \big] \notag \\
&\,\quad + \frac{1}{2}( ((\rho^{n} - \rho_{h}^{n})  \Vu^{n-1} \cdot \nabla) u^{n},  v_{h}) \notag \\ 
&\,\quad - \frac{1}{2} \big[ ( ( \rho_{h}^{n} u_{h}^{n-1} \cdot \nabla ) v_{h},  e_{u,h}^{n})
+ ( ( \rho_{h}^{n} u_{h}^{n-1}\cdot\nabla ) v_{h},  u^{n}-\widehat u_h^n) \big] \notag \\ 
&\,\quad
- \frac{1}{2} \big[ (  (\rho_{h}^{n} (\Vu^{n-1}-\widehat u_h^{n-1})
\cdot\nabla ) v_{h}, u^{n}) 
+ ( ( \rho_{h}^{n} e_{u,h}^{n-1} \cdot \nabla ) v_{h},  u^{n}) \big] \notag \\
&\,\quad 
- \frac{1}{2} ( ((\rho^{n} - \rho_{h}^{n}) \Vu^{n-1} \cdot \nabla ) v_{h},  u^{n})  \notag \\       
&\,\quad + (\mu \nabla  e_{u,h}^{n}, \nabla  v_{h}) - ( e_{p,h}^{n}, \nabla\cdot  v_{h}) \notag \\
& = (R_u^n ,v_{h}) 
\qquad\forall\, v_{h} \in \mathring{\rm P}^{\rm 1b}(\CT_{h})^d . 
\end{align}
Then, substituting $v_{h} = e_{u,h}^{n}$ into the above equation and using the property $( e_{p,h}^{n}, \nabla\cdot  e_{u,h}^{n})=0$ (which is a consequence of \eqref{DNS_FEM_eq3}), we obtain the following error equation of $e_{u,h}^{n}$: 
\begin{align}
\label{DNS_error_eq2_old}
&\hspace{-9pt}
(\rho_{h}^{n-1}D_{\tau} e_{u,h}^{n},  e_{u,h}^{n}) 
+ \frac{1}{2}(D_{\tau} \rho_{h}^{n} e_{u,h}^{n},  e_{u,h}^{n}) 
+ (\mu \nabla  e_{u,h}^{n}, \nabla  e_{u,h}^{n}) 
\nonumber \\
\nonumber 
= & -(\rho_{h}^{n-1}D_{\tau} ( u^{n}-\widehat u_h^n), 
 e_{u,h}^{n}) \\
\nonumber 
 & - ((\rho^{n-1} -\rho_{h}^{n-1})D_{\tau}  u^{n},  e_{u,h}^{n}) \\
\nonumber 
 & - \frac{1}{2} \big[(D_{\tau}(\rho^{n}-\rho_{h}^{n}) e_{u,h}^{n},  e_{u,h}^{n}) 
 + (D_{\tau}\rho^{n}( u^{n}-\widehat u_h^n), 
 e_{u,h}^{n})  \big] \\
\nonumber 
& -\frac{1}{2} (D_{\tau}(\rho^{n} - \rho_{h}^{n})u_{h}^{n},  e_{u,h}^{n}) \\
\nonumber 
& -\frac{1}{2} \big[ ( (\rho_{h}^{n}u_{h}^{n-1}\cdot \nabla ) e_{u,h}^{n},  e_{u,h}^{n}) 
+ ( ( \rho_{h}^{n}u_{h}^{n-1}\cdot \nabla ) ( u^{n}-\widehat u_h^n), 
 e_{u,h}^{n}) \big] \\
\nonumber 
& -\frac{1}{2} \big[ ( ( \rho_{h}^{n}(\Vu^{n-1}-\widehat u_h^{n-1} )\cdot 
\nabla ) u^{n}, e_{u,h}^{n}) + ( (\rho_{h}^{n} e_{u,h}^{n-1}\cdot 
\nabla ) u^{n}, e_{u,h}^{n}) \big] \\ 
\nonumber 
& -\frac{1}{2} ( ( (\rho^{n} - \rho_{h}^{n})\Vu^{n-1}\cdot \nabla ) u^{n}, 
 e_{u,h}^{n})  \\
\nonumber 
& + \frac{1}{2} \big[ ( ( \rho_{h}^{n}u_{h}^{n-1}\cdot \nabla ) e_{u,h}^{n},  e_{u,h}^{n}) 
+ ( (\rho_{h}^{n}u_{h}^{n-1}\cdot \nabla ) e_{u,h}^{n},  u^{n}-\widehat u^{n} ) \big] \\ 
\nonumber 
& + \frac{1}{2} \big[ ( (\rho_{h}^{n} (\Vu^{n-1}-\widehat u_h^{n-1} ) \cdot 
\nabla ) e_{u,h}^{n},  u^{n}) + ( ( \rho_{h}^{n} e_{u,h}^{n-1}\cdot 
\nabla ) e_{u,h}^{n},  u^{n}) \big] \\
\nonumber 
& + \frac{1}{2} ( ( (\rho^{n}- \rho_{h}^{n})\Vu^{n-1}\cdot \nabla ) e_{u,h}^{n},  u^{n}) \\ 
\nonumber 
& + (R_u^n ,e_{u,h}^{n}) \\
=& \sum_{j=1}^{11} F_j^n  . 
\end{align}
Since $(\rho_{h}^{n}u_{h}^{n-1}\cdot \nabla 
 e_{u,h}^{n},  e_{u,h}^{n}) $ appears with opposite signs in both $F_5^n$ and $F_8^n$, it follows that 
 \begin{align}
\label{DNS_error_eq2}
&\hspace{-9pt}
(\rho_{h}^{n-1}D_{\tau} e_{u,h}^{n},  e_{u,h}^{n}) 
+ \frac{1}{2}(D_{\tau}\rho_{h}^{n} e_{u,h}^{n},  e_{u,h}^{n}) 
+ (\mu \nabla  e_{u,h}^{n}, \nabla  e_{u,h}^{n}) 
=
\sum_{j=1}^{11} \widehat F_j^n + (R_u^n , e_{u,h}^{n} )  . 
\end{align}
where $\widehat F_j^n=F_j^n$ for $j\neq 5,8$, and 
\begin{align}
\widehat F_5^n
&=
-\frac{1}{2} ( ( \rho_{h}^{n}u_{h}^{n-1}\cdot \nabla ) ( u^{n}-\widehat u_h^n), e_{u,h}^{n}) , \\
\widehat F_8^n
&=
\frac{1}{2} ( (\rho_{h}^{n}u_{h}^{n-1}\cdot \nabla )  e_{u,h}^{n},  u^{n}-\widehat u^{n} ) .
\end{align}
In the following, we estimate $\vert \widehat F_{j}^{n}\vert$ for $j=1,\dots,11$. 

First, we note that $(D_{\tau}\widehat u_h^n,D_{\tau}\widehat p_h^n)$ is actually the Stokes--Ritz projection of $(D_{\tau}u^n,D_{\tau}p^n)$. Therefore, \eqref{assumption_stokes_proj} implies 
\begin{align*}
\Vert D_{\tau}u^n-D_{\tau}\widehat u_h^n\Vert_{L^{2}(\Omega)} 
\le
C h^{2} \left( \Vert D_{\tau}u^n\Vert_{H^{2}(\Omega)} 
+ \Vert D_{\tau}p^n\Vert_{H^{1}(\Omega)}  \right) 
\le
C h^2 .
\end{align*}
By using this result and the property $\|\rho_{h}^{n-1}\|_{L^\infty(\Omega)}\le \frac32\rho_{\max}$, we have 
\begin{align*}
\vert \widehat F_{1}^{n}\vert = & \vert (\rho_{h}^{n-1}D_{\tau} ( u^{n}-\widehat u_h^n), 
 e_{u,h}^{n})\vert 
\le C h^2 \Vert  e_{u,h}^{n}\Vert_{L^{2}(\Omega)}  
\le C \epsilon^{-1}h^{4} 
+ \epsilon \Vert  e_{u,h}^{n} \Vert_{L^{2}(\Omega)}^{2} .
\end{align*}

Second, we have 
\begin{align*}
\vert \widehat F_{2}^{n} \vert 
= & \vert ((\rho^{n-1} - \rho_{h}^{n-1})D_{\tau}  u^{n},  e_{u,h}^{n}) \vert \\  
\le & \Vert \rho^{n-1} - \rho_{h}^{n-1}  \Vert_{L^{2}(\Omega)} \Vert D_{\tau}  u^{n}\Vert_{L^{3}(\Omega)} \Vert  e_{u,h}^{n}\Vert_{L^{6}(\Omega)} 
 \\ 
\le & C\Vert \rho^{n-1} - \rho_{h}^{n-1} \Vert_{L^{2}(\Omega)} \Vert D_{\tau}  u^{n}\Vert_{H^1(\Omega)} \Vert \nabla e_{u,h}^{n}\Vert_{L^{2}(\Omega)} 
\quad\mbox{(Sobolv embedding)}
 \\
 \le & C \big( \Vert \rho^{n-1} - P_h^{\rm dG} \rho^{n-1}\Vert_{L^{2}(\Omega)} 
 +  \Vert e_{\rho, h}^{n-1} \Vert_{L^{2}(\Omega)} \big) \Vert D_{\tau}  u^{n}\Vert_{H^1(\Omega)} \Vert \nabla e_{u,h}^{n}\Vert_{L^{2}(\Omega)} \\ 
 \le & C \big( h^{2}  
 + \Vert e_{\rho, h}^{n-1} \Vert_{L^{2}(\Omega)} \big) 
 \Vert \nabla  e_{u,h}^{n}\Vert_{L^{2}(\Omega)} \\ 
 \le & C \epsilon^{-1} \big( h^{4} + \Vert e_{\rho, h}^{n-1} \Vert_{L^{2}(\Omega)}^{2} \big) 
 + \epsilon \Vert \nabla e_{u,h}^{n} \Vert_{L^2(\Omega)}^{2}, \\[5pt] 
\vert \widehat F_{3}^{n} \vert 
= & \frac{1}{2} \vert (D_{\tau}(\rho^{n} - \rho_{h}^{n})\, e_{u,h}^{n},  e_{u,h}^{n}) 
+ (D_{\tau}\rho^{n}( u^{n}-\widehat u_h^n),  e_{u,h}^{n}) \vert \\
\le & \Vert D_{\tau}(\rho^{n} - \rho_h^{n})\Vert_{L^2(\Omega)} 
\Vert  e_{u,h}^{n}\Vert_{L^{4}(\Omega)}^{2} 
 + \Vert D_{\tau}\rho^{n}\Vert_{L^{3}(\Omega)} 
\Vert  u^{n}-\widehat u_h^n\Vert_{L^{2}(\Omega)} 
\Vert  e_{u,h}^{n}\Vert_{L^6(\Omega)} \\ 
\le & C \Vert D_{\tau}(\rho^{n} - \rho_h^{n})\Vert_{L^2(\Omega)} 
\Vert \nabla e_{u,h}^{n}\Vert_{L^2(\Omega)}^{2} 
 + \Vert D_{\tau}\rho^{n}\Vert_{H^1(\Omega)} 
\Vert  u^{n}-\widehat u_h^n\Vert_{L^{2}(\Omega)} 
\Vert \nabla e_{u,h}^{n}\Vert_{L^2(\Omega)} \\
\le & C \big( \Vert D_{\tau} e_{\rho, h}^{n} \Vert_{L^{2}}  + h^2 \|\rho^n\|_{H^2(\Omega)} \big)
\Vert \nabla e_{u,h}^{n}\Vert_{L^2(\Omega)}^{2} 
+ \Vert  u^{n}-\widehat u_h^n\Vert_{L^{2}(\Omega)} 
\Vert \nabla e_{u,h}^{n}\Vert_{L^2(\Omega)} \\
\le & 
C \big( \Vert D_{\tau} e_{\rho, h}^{n} \Vert_{L^{2}}  + h^2 
+\epsilon \big)
\Vert \nabla e_{u,h}^{n}\Vert_{L^2(\Omega)}^{2} 
+ C \epsilon^{-1} h^{4} ,\\[5pt]
\vert \widehat F_{5}^{n} \vert 
= & \frac{1}{2} \vert ( ( \rho_{h}^{n}u_{h}^{n-1}\cdot \nabla )
( u^{n}-\widehat u_h^n),  e_{u,h}^{n})\vert \\
\le & \vert ( (\rho_h^{n} (u_{h}^{n-1}-\Vu^{n-1})\cdot \nabla )
( u^{n}-\widehat u_h^n),  e_{u,h}^{n})\vert + \vert ( ( (\rho_h^{n} - \rho^{n} )\Vu^{n-1} \cdot \nabla )
( u^{n}-\widehat u_h^n),  e_{u,h}^{n})\vert \\
& + \vert ( ( \rho^{n}  (\Vu^{n-1}- u^{n}) \cdot \nabla )
( u^{n}-\widehat u_h^n),  e_{u,h}^{n})\vert + \vert ( ( \rho^{n}   u^{n} \cdot \nabla )
( u^{n}-\widehat u_h^n),  e_{u,h}^{n})\vert \\
\le & \vert ( (\rho_h^{n} (u_{h}^{n-1}-\Vu^{n-1})\cdot \nabla )
( u^{n}-\widehat u_h^n),  e_{u,h}^{n})\vert + \vert ( ( (\rho_h^{n} - \rho^{n} )\Vu^{n-1} \cdot \nabla )
( u^{n}-\widehat u_h^n),  e_{u,h}^{n})\vert \\
& + \vert ( ( \rho^{n}  (\Vu^{n-1}- u^{n}) \cdot \nabla )
( u^{n}-\widehat u_h^n),  e_{u,h}^{n})\vert \\
&+ \vert 
-( \nabla\cdot (\rho^{n}   u^{n})( u^{n}-\widehat u_h^n), 
 e_{u,h}^{n})
-( u^{n}-\widehat u_h^n , 
\rho^{n} u^{n} \cdot \nabla  e_{u,h}^{n})\vert \\
= & \vert ( (\rho_h^{n} (u_{h}^{n-1}-\Vu^{n-1})\cdot \nabla )
( u^{n}-\widehat u_h^n),  e_{u,h}^{n})\vert + \vert ( ( (\rho_h^{n} - \rho^{n} )\Vu^{n-1} \cdot \nabla )
( u^{n}-\widehat u_h^n),  e_{u,h}^{n})\vert \\
& + \vert ( (\rho^{n}  (\Vu^{n-1}- u^{n}) \cdot \nabla )
( u^{n}-\widehat u_h^n),  e_{u,h}^{n})\vert \\
&+ \vert 
( \partial_t\rho^{n} ( u^{n}-\widehat u_h^n), 
 e_{u,h}^{n})
-( u^{n}-\widehat u_h^n, 
\rho^{n} u^{n} \cdot \nabla  e_{u,h}^{n})\vert 
\quad\mbox{(use $\partial_t\rho^{n}+\nabla\cdot(\rho^{n} u^{n})=0$)} \\ 
\le & C\Vert \Vu^{n-1} - u_{h}^{n-1}\Vert_{L^{2}(\Omega)} 
\Vert  u^{n} - \widehat u_h^n \Vert_{H^{1}(\Omega)} 
\Vert  e_{u,h}^{n}\Vert_{L^{\infty}(\Omega)} \\
&
+ C  
\Vert \rho^{n} - \rho_{h}^{n}\Vert_{L^{2}(\Omega)} 
\Vert \Vu^{n-1}\Vert_{L^\infty(\Omega)} 
\Vert \nabla( u^{n} - \widehat u_h^n)\Vert_{L^2(\Omega)} 
\Vert  e_{u,h}^{n}\Vert_{L^{\infty}(\Omega)} \\
&
+ \Vert \rho^{n}\Vert_{L^{\infty}(\Omega)}
\tau \Vert \partial_{t}\Vu \Vert_{L^{\infty}(0,T; L^3(\Omega))} 
\Vert \nabla( u^{n} - \widehat u_h^n)\Vert_{L^2(\Omega)} 
\Vert  e_{u,h}^{n}\Vert_{L^6(\Omega)} \\
& + C (\Vert \partial_{t}\rho^{n}\Vert_{H^{1}(\Omega)}  
+ \Vert \rho^{n}\Vert_{L^{\infty}(\Omega)}\Vert  u^{n}\Vert_{L^{\infty}(\Omega)}) 
\Vert  u^{n} - \widehat u_h^n\Vert_{L^{2}(\Omega)} 
\Vert  e_{u,h}^{n} \Vert_{H^{1}(\Omega)} \\
\le & C h^{-\frac{1}{2}} \Vert \Vu^{n-1} - u_{h}^{n-1}\Vert_{L^{2}(\Omega)} 
\Vert  u^{n} - \widehat u_h^n \Vert_{H^{1}(\Omega)} 
\Vert \nabla e_{u,h}^{n}\Vert_{L^2(\Omega)} \quad\mbox{(inverse inequality)} \\
&+ C h^{-\frac{1}{2}} \Vert \rho^{n} - \rho_{h}^{n}\Vert_{L^{2}(\Omega)} 
\Vert  u^{n} - \widehat u_h^n\Vert_{H^{1}(\Omega)} 
\Vert \nabla e_{u,h}^{n}\Vert_{L^2(\Omega)} \\ 
&+ C \tau \Vert  u^{n} - \widehat u_h^n\Vert_{H^{1}(\Omega)} 
\Vert \nabla e_{u,h}^{n}\Vert_{L^2(\Omega)}
+ C \Vert  u^{n} -\widehat u_h^n\Vert_{L^{2}(\Omega)} 
\Vert \nabla e_{u,h}^{n}\Vert_{L^2(\Omega)} \\ 
\le & C h^{\frac{1}{2}} \big( \Vert  e_{u,h}^{n-1}\Vert_{L^{2}(\Omega)}
+ \Vert e_{\rho, h}^{n} \Vert_{L^{2}(\Omega)} + h^{2} \big) 
\Vert \nabla e_{u,h}^{n}\Vert_{L^2(\Omega)} 
 + C h (\tau + h) \Vert \nabla e_{u,h}^{n}\Vert_{L^2(\Omega)} \\
\le & \left( h + \epsilon \right)
\Vert \nabla e_{u,h}^{n}\Vert_{L^2(\Omega)}^{2} 
+ C \big( \Vert  e_{u,h}^{n-1}\Vert_{L^{2}(\Omega)}^{2} 
+ \Vert e_{\rho, h}^{n} \Vert_{L^{2}(\Omega)}^{2} \big) 
+ C \epsilon^{-1} h^{2} \left( \tau^{2}+ h^{2}\right) ,\\[5pt]
\vert \widehat F_{6}^{n} \vert = & \frac{1}{2} \vert ( (\rho_{h}^{n}(\Vu^{n-1}-\widehat u_h^{n-1} )
\cdot \nabla ) u^{n}, e_{u,h}^{n}) + ( (\rho_h^{n} e_{u,h}^{n-1}\cdot 
\nabla ) u^{n}, e_{u,h}^{n}) \vert \\
\le & \frac{3}{2} \rho_{\rm max} \big( \Vert \Vu^{n-1}
-\widehat u_h^{n-1}  \Vert_{L^{2}(\Omega)} 
+ \Vert  e_{u,h}^{n-1} \Vert_{L^{2}(\Omega)} \big) 
\Vert \nabla u^{n} \Vert_{L^3(\Omega)} \Vert  e_{u,h}^{n}\Vert_{L^{6}(\Omega)} \\
\le & C \big( h^{2} 
 + \Vert  e_{u,h}^{n-1} \Vert_{L^{2}(\Omega)} \big) 
\Vert u^{n} \Vert_{H^2(\Omega)} \Vert \nabla e_{u,h}^{n}\Vert_{L^{2}(\Omega)} \\ 
\le & C \epsilon^{-1} \big( h^{4} + \Vert  e_{u,h}^{n-1} \Vert_{L^{2}(\Omega)}^{2} \big) 
+ \epsilon \Vert \nabla e_{u,h}^{n}\Vert_{L^{2}(\Omega)}^{2}, \\[5pt]
\vert \widehat F_{7}^{n} \vert = & \frac{1}{2} \vert ( ( (\rho^{n} - \rho_h^{n})\Vu^{n-1}\cdot 
\nabla ) u^{n},  e_{u,h}^{n}) \vert \\
\le & C 
\left(\Vert \rho^{n} - P_h^{\rm dG}\rho^{n}\Vert_{L^{2}(\Omega)} 
+ \Vert e_{\rho, h}^{n}\Vert_{L^{2}(\Omega)} \right) \Vert \Vu^{n-1}\Vert_{L^{\infty}(\Omega)} 
\Vert \nabla u^{n}\Vert_{L^3(\Omega)} \Vert  e_{u,h}^{n}\Vert_{L^6(\Omega)} \\ 
\le & C \left(h^{2}\Vert \rho^{n} \Vert_{H^{2}(\Omega)}
 +  \Vert e_{\rho, h}^{n}\Vert_{L^{2}(\Omega)}\right) 
\Vert \nabla e_{u,h}^{n}\Vert_{L^2(\Omega)} \\ 
\le & \epsilon \Vert \nabla e_{u,h}^{n}\Vert_{L^2(\Omega)}^{2}
+ C \epsilon^{-1} \big( h^{4} + \Vert e_{\rho, h}^{n}\Vert_{L^{2}(\Omega)}^{2} \big), \\[5pt] 
\vert \widehat F_{8}^{n} \vert 
= & 
\frac{1}{2} \vert  (( \rho_h^{n} u_{h}^{n-1}\cdot 
\nabla ) e_{u,h}^{n},  u^{n}-\widehat u_h^n) \vert \\ 
\le & 
\frac{3}{2} \rho_{\rm max} \Vert u_{h}^{n-1}\Vert_{L^{\infty}(\Omega)} 
\Vert \nabla e_{u,h}^{n}\Vert_{L^2(\Omega)} 
\Vert  u^{n}-\widehat u_h^n\Vert_{L^{2}(\Omega)} \\ 
\le & 
C \Vert \nabla e_{u,h}^{n}\Vert_{L^2(\Omega)}
\Vert  u^{n}-\widehat u_h^n\Vert_{L^{2}(\Omega)}
 \quad (\text{by }(\ref{u_inf_sup_pre})) \\ 
\le & 
\epsilon \Vert \nabla e_{u,h}^{n}\Vert_{L^2(\Omega)}^{2}
+ C \epsilon^{-1} h^{4}, \\[5pt]
\vert \widehat F_{9}^{n} \vert = & \frac{1}{2} \vert ( ( \rho_h^{n} (\Vu^{n-1}-\widehat u_h^{n-1} ) 
\cdot \nabla ) e_{u,h}^{n},  u^{n}) 
+ ( ( \rho_{h}^{n} e_{u,h}^{n-1}\cdot \nabla )  e_{u,h}^{n},  u^{n}) \vert \\
\le & \frac{3}{2} \rho_{\rm max}  
\big( \Vert \Vu^{n-1}-\widehat u_h^{n-1}  \Vert_{L^{2}(\Omega)} 
 + \Vert  e_{u,h}^{n-1} \Vert_{L^{2}(\Omega)} \big) \Vert \nabla  e_{u,h}^{n} \Vert_{L^{2}(\Omega)} 
\Vert  u^{n}\Vert_{L^{\infty}(\Omega)} \\ 
\le & C \big( \Vert \Vu^{n-1}-\widehat u_h^{n-1}  \Vert_{L^{2}(\Omega)} 
 + \Vert  e_{u,h}^{n-1} \Vert_{L^{2}(\Omega)} \big) \Vert \nabla  e_{u,h}^{n} \Vert_{L^{2}(\Omega)} \\
\le & C \big( h^{2} + \Vert  e_{u,h}^{n-1} \Vert_{L^{2}(\Omega)} \big) 
\Vert \nabla  e_{u,h}^{n} \Vert_{L^{2}(\Omega)} \\
\le & \epsilon \Vert \nabla  e_{u,h}^{n} \Vert_{L^{2}(\Omega)}^{2} 
+ C \epsilon^{-1} \big( h^{4} + \Vert  e_{u,h}^{n-1} \Vert_{L^{2}(\Omega)}^{2} \big), \\[5pt]
\vert \widehat F_{10}^{n} \vert = & \frac{1}{2} \vert ( ( (\rho^{n}- \rho_h^{n})\Vu^{n-1}\cdot 
\nabla ) e_{u,h}^{n},  u^{n}) \vert \\
\le & \left(  \Vert \rho^{n}- P_h^{\rm dG} \rho^{n}\Vert_{L^{2}(\Omega)} 
+ \Vert e_{\rho, h}^{n}\Vert_{L^{2}(\Omega)} \right) \Vert \Vu^{n-1}\Vert_{L^{\infty}(\Omega)} 
\Vert \nabla  e_{u,h}^{n} \Vert_{L^{2}(\Omega)} \Vert  u^{n} \Vert_{L^{\infty}(\Omega)} \\ 
\le & C \big(  \Vert \rho^{n}- P_h^{\rm dG} \rho^{n}\Vert_{L^{2}(\Omega)} 
+ \Vert e_{\rho, h}^{n}\Vert_{L^{2}(\Omega)} \big) \Vert \nabla  e_{u,h}^{n} \Vert_{L^{2}(\Omega)} \\
\le & \big( h^{2}\Vert \rho^{n}\Vert_{H^{2}(\Omega)} 
 + \Vert e_{\rho, h}^{n}\Vert_{L^{2}(\Omega)} \big) 
\Vert \nabla  e_{u,h}^{n} \Vert_{L^{2}(\Omega)} \\ 
\le & \epsilon \Vert \nabla  e_{u,h}^{n} \Vert_{L^{2}(\Omega)}^{2} 
+ C \epsilon^{-1} \big( h^{4} + \Vert e_{\rho, h}^{n}\Vert_{L^{2}(\Omega)}^{2} \big).
\end{align*}
Under regularity \eqref{sol_reg}, the truncation error defined in \eqref{def-R_u^n} satisfies $\|R_{u}^n\|_{L^2(\Omega)}\le C\tau$. Therefore,    
\begin{align*}
\vert \widehat F_{11}^n \vert 
\le 
C \tau \Vert  e_{u,h}^{n} \Vert_{L^{2}(\Omega)}
\le 
C \tau^{2} + C \Vert  e_{u,h}^{n} \Vert_{L^{2}(\Omega)}^{2}.
\end{align*}

It remains to estimate $|\widehat F_{4}^{n}|$. To this end, we use the following inequality inequality and error estimate:
\begin{align}
&\|e_{u,h}^n\|_{L^\infty(\Omega)} \le Ch^{-\frac12} \|e_{u,h}^n\|_{H^1(\Omega)}
&&\mbox{for}\,\,\, d=2,3, \label{inverse-Linfty-error-1} \\
&\|u^n-\widehat u_h^n\|_{L^\infty(\Omega)} 
\le Ch^{\frac12} 
(\|u^n\|_{H^2(\Omega)}+\|p^n\|_{H^1(\Omega)})
&&\mbox{for}\,\,\, d=2,3 , \label{inverse-Linfty-error-2} \\
&\|D_\tau\rho^n - P_h^{\rm dG}D_\tau\rho^n\|_{H^{-1}(\Omega)} 
\le Ch^2
\|D_\tau\rho^n\|_{H^1(\Omega)} , \label{inverse-Linfty-error-3}
\end{align}
where the second inequality can be proved by using \eqref{assumption_stokes_proj} combined with inverse inequality and triangle inequality. 
By using Lemma \ref{lemma_dtau_rho_h_minus} and Lemma \ref{lemma_P_rho_H1}, and \eqref{inverse-Linfty-error-1}--\eqref{inverse-Linfty-error-3}, we have 
\begin{align*}
\widehat F_{4}^{n} 
= & \frac{1}{2} (D_{\tau}(\rho^{n} - \rho_{h}^{n})u_{h}^{n},  e_{u,h}^{n}) \\
= & \frac{1}{2} (D_{\tau}e_{\rho,h}^{n} , P_h^{\rm dG}( u_{h}^{n}\cdot e_{u,h}^{n}) ) 
+\frac{1}{2} (D_{\tau}(\rho^{n} - P_h^{\rm dG}\rho^{n}), u_{h}^{n}\cdot e_{u,h}^{n} )  \\
\le &
C\Vert P_h^{\rm dG}( u_{h}^{n} e_{u,h}^{n}) \Vert_{H^{1}(\CT_{h})} 
(\Vert e_{\rho, h}^{n} \Vert_{L^{2}(\Omega)}
 + \Vert  e_{u,h}^{n-1}\Vert_{L^{2}(\Omega)} + \tau  + h^{2}\big) 
 +Ch^2 \|D_{\tau} \rho^{n}\|_{H^1(\Omega)}
 \|u_{h}^{n} e_{u,h}^{n}\|_{H^1(\Omega)}
  \\
 \le &
C\Vert  u_{h}^{n} e_{u,h}^{n} \Vert_{H^{1}(\CT_{h})} 
(\Vert e_{\rho, h}^{n} \Vert_{L^{2}(\Omega)}
 + \Vert  e_{u,h}^{n-1}\Vert_{L^{2}(\Omega)} + \tau  + h^{2}\big) 
\quad\mbox{(due to Lemma \ref{lemma_P_rho_H1})} \\
= &
C\Vert \nabla(u_{h}^{n} e_{u,h}^{n} )\Vert_{L^2(\Omega)} 
(\Vert e_{\rho, h}^{n} \Vert_{L^{2}(\Omega)}
 + \Vert  e_{u,h}^{n-1}\Vert_{L^{2}(\Omega)} + \tau  + h^{2}\big) \\
\le &
C(\Vert  u_{h}^{n} \Vert_{L^\infty(\Omega)}\Vert \nabla e_{u,h}^{n} \Vert_{L^2(\Omega)} 
+\Vert \nabla u_{h}^{n} \Vert_{L^3(\Omega)}
\Vert  e_{u,h}^{n} \Vert_{L^6(\Omega)})
(\Vert e_{\rho, h}^{n} \Vert_{L^{2}(\Omega)}
 + \Vert  e_{u,h}^{n-1}\Vert_{L^{2}(\Omega)} + \tau  + h^{2}\big) \\
\le &
C(\Vert e_{u,h}^{n} \Vert_{L^\infty(\Omega)}+\Vert u^{n}-\widehat u_h^n \Vert_{L^\infty(\Omega)}
+\Vert \nabla e_{u,h}^{n} \Vert_{L^3(\Omega)}+\Vert u^{n}-\widehat u_h^n \Vert_{W^{1,3}(\Omega)}) \Vert \nabla e_{u,h}^{n} \Vert_{L^2(\Omega)}  \\
&\cdot 
(\Vert e_{\rho, h}^{n} \Vert_{L^{2}(\Omega)}
 + \Vert  e_{u,h}^{n-1}\Vert_{L^{2}(\Omega)} + \tau  + h^{2}\big) \\
\le &
C(h^{-\frac12}\Vert \nabla e_{u,h}^{n} \Vert_{L^2(\Omega)} + h^{\frac12} ) \Vert \nabla e_{u,h}^{n} \Vert_{L^2(\Omega)} 
(\Vert e_{\rho, h}^{n} \Vert_{L^{2}(\Omega)}
 + \Vert  e_{u,h}^{n-1}\Vert_{L^{2}(\Omega)} + \tau  + h^{2}\big) \\
 \le &
Ch^{-\frac12}\Vert e_{\rho, h}^{n} \Vert_{L^{2}(\Omega)} 
\Vert \nabla e_{u,h}^{n} \Vert_{L^2(\Omega)}^2  
+Ch^{\frac12} \Vert \nabla e_{u,h}^{n} \Vert_{L^2(\Omega)} \Vert e_{\rho, h}^{n} \Vert_{L^{2}(\Omega)} \\
&
+C(\Vert  e_{u,h}^{n-1}\Vert_{L^{2}(\Omega)} + \tau  + h^{2}\big) h^{-\frac12}\Vert \nabla e_{u,h}^{n} \Vert_{L^2(\Omega)}^2 \\
&
+C(\Vert  e_{u,h}^{n-1}\Vert_{L^{2}(\Omega)} + \tau  + h^{2}\big) h^{\frac12}\Vert \nabla e_{u,h}^{n} \Vert_{L^2(\Omega)} \\
 \le &
\Big[Ch^{-\frac12} \big(\tau + h^{\frac{3}{2}+\frac{\alpha}{2}} + \kappa^{\frac12} h^{\frac{3}{2}}\big)   
+Ch^{-\frac12}(h^{\frac{3}{2}+\frac{\alpha}{2}}  + \tau^{\frac56}  + \tau  + h^{2}\big) \Big]
\Vert \nabla e_{u,h}^{n} \Vert_{L^2(\Omega)}^2  \\
&
+Ch^{\frac12}  \Vert \nabla e_{u,h}^{n} \Vert_{L^2(\Omega)} ^2 
+Ch^{\frac12} \Vert e_{\rho, h}^{n} \Vert_{L^{2}(\Omega)}^2 
+Ch^{\frac12}  (\Vert  e_{u,h}^{n-1}\Vert_{L^{2}(\Omega)}^2 + \tau^2  + h^{4}\big) , 
\end{align*}
where we have used \eqref{rho_errors} and \eqref{pre_step_error2} in deriving the last inequality. With the stepsize restriction $\tau\le \kappa h^{\frac{d}{2}} $, the inequality above furthermore implies  
\begin{align*}
\widehat F_{4}^{n} 
&\le 
Ch^{\frac12} (\Vert \nabla e_{u,h}^{n} \Vert_{L^2(\Omega)}^2 
+ \Vert e_{\rho, h}^{n} \Vert_{L^{2}(\Omega)}^2 
+  \Vert  e_{u,h}^{n-1}\Vert_{L^{2}(\Omega)}^2) 
+ C( \tau^2  + h^{4}\big) . 
\end{align*}

By substituting the consistency error \eqref{Estimate-R_u^n} and the estimates of $\widehat F_{j}$, $j=1,\dots,11$, into (\ref{DNS_error_eq2}), we obtain  
\begin{align*}
& \frac{1}{2}D_{\tau} \Vert \sqrt{\rho_{h}^{n}}  e_{u,h}^{n} \Vert_{L^{2}(\Omega)}^{2} 
+ \mu\Vert  e_{u,h}^{n}\Vert_{H^{1}(\Omega)}^{2} \\
& \le 
(\epsilon+Ch^{\frac12}) \Vert \nabla e_{u,h}^{n} \Vert_{L^2(\Omega)}^2 
+ C\epsilon^{-1} (\Vert e_{\rho, h}^{n} \Vert_{L^{2}(\Omega)}^2 +\Vert e_{\rho, h}^{n-1} \Vert_{L^{2}(\Omega)}^2  
+  \Vert  e_{u,h}^{n-1}\Vert_{L^{2}(\Omega)}^2) 
+ C\epsilon^{-1} ( \tau^2  + h^{4}\big) . 
\end{align*}
By choosing sufficiently small $\epsilon$ and $h$, the first term on the right-hand side can be absorbed by the left-hand side. Then, summing up the inequality above for $n=1, \cdots, k$, we obtain for $k = 1, \cdots, m$,
\begin{align}
\label{energy_estimate_u}
& \frac{1}{2} \Vert \sqrt{\rho_h^{k}}  e_{u,h}^{k} \Vert_{L^{2}(\Omega)}^{2} 
+  \frac{\mu}{2}\sum_{n=0}^{k} \tau \Vert  e_{u,h}^{n} \Vert_{H^{1}(\Omega)}^{2} \notag \\ 
& \le 
C \sum_{n=0}^{k} \tau (\Vert e_{\rho, h}^{n} \Vert_{L^{2}(\Omega)}^2 
+\Vert e_{\rho, h}^{n-1} \Vert_{L^{2}(\Omega)}^2 +  \Vert  e_{u,h}^{n-1}\Vert_{L^{2}(\Omega)}^2) 
+ C ( \tau^2  + h^{4}\big)  .
\end{align}
Summing up $\lambda\times$\eqref{rho_true_error} and \eqref{energy_estimate_u}, we obtain 
\begin{align*}
& \lambda \Vert e_{\rho, h}^{k}\Vert_{L^{2}(\Omega)}^{2} 
+\frac{1}{2} \Vert \sqrt{\rho_h^{k}}  e_{u,h}^{k} \Vert_{L^{2}(\Omega)}^{2} 
+ \frac{\mu}{2} \sum_{n=0}^{k} \tau \Vert  e_{u,h}^{n} \Vert_{H^{1}(\Omega)}^{2} \\ 
& \le C (\tau^{2} + h^{3+2\alpha}) 
+  C \lambda \sum_{n=0}^{k}  \tau 
\Vert  e_{u,h}^{n} \Vert_{H^{1}(\Omega)}^{2} \\ 
& \quad + C \tau \sum_{n=1}^{k} 
\left( \Vert e_{\rho, h}^{n} \Vert_{L^{2}(\Omega)}^{2} 
+ \Vert e_{\rho, h}^{n-1} \Vert_{L^{2}(\Omega)}^{2} +  \Vert  e_{u,h}^{n-1}\Vert_{L^{2}(\Omega)}^{2} \right). 
\end{align*} 
By choosing $\lambda$ small enough, the term $C \lambda \sum_{n=0}^{k}  \tau 
\Vert  e_{u,h}^{n} \Vert_{H^{1}(\Omega)}^{2}$ can be absorbed by the left-hand side and we obtain for $1\le k\le m$ 
\begin{align}
\label{energy_estiamte_u_rho}
& \Vert e_{\rho, h}^{k}\Vert_{L^{2}(\Omega)}^{2} 
+ \Vert \sqrt{\rho_h^{k}}  e_{u,h}^{k} \Vert_{L^{2}(\Omega)}^{2} 
+ \sum_{n=0}^{k} \tau \Vert  e_{u,h}^{n} \Vert_{H^{1}(\Omega)}^{2} \\ 
\nonumber
& \le C (\tau^{2} + h^{3+2\alpha}) 
+ C \tau \sum_{n=1}^{k} 
\left(\Vert  e_{u,h}^{n-1}\Vert_{L^{2}(\Omega)}^{2} 
+ \Vert e_{\rho, h}^{n} \Vert_{L^{2}(\Omega)}^{2} 
+ \Vert e_{\rho, h}^{n-1} \Vert_{L^{2}(\Omega)}^{2} \right) . 
\end{align} 
Applying Gr{\"{o}}nwall's inequality to (\ref{energy_estiamte_u_rho}) and using \eqref{rho_inf_sup_pre} and \eqref{rho_inf_sup_now}, we have 
\begin{align}
\label{true_error_u_rho}
\max_{1\le n \le m} \left( \Vert e_{\rho, h}^{n} \Vert_{L^{2}(\Omega)}^{2} 
+ \Vert  e_{u,h}^{n} \Vert_{L^{2}(\Omega)}^{2}\right) 
+ \sum_{n=0}^{m} \tau \Vert  e_{u,h}^{n} \Vert_{H^{1}(\Omega)}^{2} 
\le C (\tau^{2} + h^{3+2\alpha}). 
\end{align}
For$\tau \le \kappa h^{\frac{d}{2}}$ and sufficiently small $\kappa$ and $h$, the inequality above implies 
\begin{align*}
& \Vert  e_{u,h}^{n} \Vert_{L^{2}(\Omega)} \le h^{\frac{3}{2}+\frac{\alpha}{2}}+\tau^{\frac{5}{6}}, \\
& \Vert  e_{u,h}^{n} \Vert_{L^{\infty}(\Omega)} \le C h^{-\frac{d}{2}} 
\Vert  e_{u,h}^{n} \Vert_{L^{2}(\Omega)} \le  h^{-\frac{d}{2}}
(\tau+h^{\frac{3}{2}+\alpha} ) \le 1, \\ 
& \Vert  P_h^{\rm RT} u_{h}^{n}-u^{n}\Vert_{L^{\infty}(\Omega)}  
\le
\Vert  P_h^{\rm RT} (u_{h}^{n}-u^{n})\Vert_{L^{\infty}(\Omega)}  
+ \Vert  P_h^{\rm RT} u^{n}-u^{n}\Vert_{L^{\infty}(\Omega)} \\
&\hspace{96pt}
\le
Ch^{-\frac{d}{2}} \Vert  P_h^{\rm RT} (u_{h}^{n}-u^{n})\Vert_{L^2(\Omega)}  
+ \Vert  P_h^{\rm RT} u^{n}-u^{n}\Vert_{L^{\infty}(\Omega)} \\
&\hspace{96pt} \le Ch^{-\frac{d}{2}}(\tau+h^{\frac32+\alpha}) \\
&\hspace{96pt} \le C\kappa +Ch^{\alpha} 
\\
&\hspace{96pt} \le 2 ,
\\
& \sum_{n=0}^{m} \tau \Vert \nabla  e_{u,h}^{n} \Vert_{L^{2}(\Omega)}^{2} 
\le C (\tau^{2} + h^{3+2\alpha}) 
\le C(\kappa^2h^3 + h^{3+2\alpha} ) \\
&\hspace{87pt} 
\le
(\kappa+h^{\alpha}) h^3 
\quad\mbox{(when $\kappa$ and $h$ are sufficiently small).} 
\end{align*}
This proves (\ref{now_step_error2}, \ref{now_step_error3}, \ref{now_step_error5}, \ref{now_step_error4}). 
Since (\ref{now_step_error1}) has been proved in (\ref{proved_now_step_error1}), 
the mathematical induction is closed. Consequently, the estimates 
(\ref{now_step_errors}) and (\ref{true_error_u_rho}) hold for $m = N$ (with the same constants), which imply the desired estimate in Theorem~\ref{thm_proj_error}.

\section{Numerical experiments}

In this section, we present numerical examples to illustrate the convergence of the numerical method shown 
in Theorem~\ref{thm_proj_error}.  All the computations are performed by Firedrake \cite{Rathgeber2016}. 

In order to test the order of convergence, we consider the following equations with source terms $f$ and $g$: 
\begin{equation}\label{equ:test}
    \begin{aligned}
    \partial_{t} \rho + \nabla\cdot (\rho u) = f 
    &&\mbox{in}\,\,\,\Omega\times(0,T], \\
    \rho \partial_{t} u + \rho  (u \cdot \nabla) u + \nabla p - \mu \Delta u = g
    &&\mbox{in}\,\,\,\Omega\times(0,T], \\ 
    \nabla\cdot u = 0
    &&\mbox{in}\,\,\,\Omega\times(0,T],
    \end{aligned}
\end{equation}
with $T=0.25$ and $\mu = 0.001$. The source terms $f$ and $g$ are constructed by substituting an exact solution to the equations. 
The errors of the numerical solutions with stepsize $\tau$ and mesh size $h$ are denoted by 
\begin{equation}
    \| E_\rho^{\tau,h}\|_{\ell^\infty(L^2)} = \max\limits_{1\le n\le N_T}\|\rho_h^n - \rho^n\|_{L^2}, \quad
    \| E_u^{\tau,h}\|_{\ell^\infty(L^2)}= \max\limits_{1\le n\le N_T}\|u_h^n - u^n\|_{L^2}, \quad
\end{equation}
The convergence order in space is computed by using the following formula, 
\begin{equation}
 \text{convergence order} = \frac{\log(E_\rho^{\tau,h_1}/E_\rho^{\tau,h_2})}{\log(h_1/h_2)}
\end{equation}
with a sufficiently small stepsize $\tau$. 
The convergence order in time is computed by using the following formula, 
\begin{equation}
 \text{convergence order} = \frac{\log(E_\rho^{\tau_1,h}/E_\rho^{\tau_1,h})}{\log(\tau_1/\tau_2)}
\end{equation}
with a sufficiently small mesh size $h$. 

In two dimensions, we consider the problem on the unit square $\Omega = (0, 1)\times (0, 1)$ with the following exact solution:
\begin{equation}
    \left\{
    \begin{aligned}
        & \rho = 2 + x(x-1)\cos(\sin(t)) + y(y-1)\sin(\sin(t)), \\
        & u = \big(\sin^2(\pi x)\sin(2\pi y), -\sin(2\pi x)\sin^2(\pi y)\big), \\
        & p = tx + y - \frac{t + 1}{2}.
    \end{aligned}
    \right.
\end{equation}
We test the convergence order in space by choosing a sufficiently small stepsize $\tau = 1/2048$ so that the error from temporal discretization is negligible in comparison with the error from spatial discretization. 
The errors of the numerical solutions are presented in Table~\ref{table:case2d-space}, where second-order convergence is observed for both $\rho$ and $u$. This is consistent with the theoretical result in Theorem \ref{thm_proj_error}. 

The convergence order in time is computed by choosing $h = \tau^{1/2}$ and presented in Table \ref{table:case2d-time}, where first-order convergence in time is observed. This is also consistent with the theoretical result in Theorem \ref{thm_proj_error}. 

\begin{table}[h!]
    \centering\small
    \caption{Spatial convergence with $\tau= {1/2048}$ }\label{table:case2d-space}
    \begin{tabular}{ccccc}
        \toprule
         $h$
         &  $\| E_\rho^{\tau,h}\|_{\ell^\infty(L^2)}$   &  convergence order 
         &  $\| E_u^{\tau,h}\|_{\ell^\infty(L^2)}$      &  convergence order  \\
         \midrule
         1/8 &  7.48e-06 &    -  &  3.17e-04 &    -  \\
        1/10 &  4.84e-06 &  1.95 &  2.01e-04 &  2.04 \\
        1/12 &  3.42e-06 &  1.91 &  1.39e-04 &  2.02 \\
        1/14 &  2.57e-06 &  1.85 &  1.02e-04 &  2.00 \\
    \bottomrule
\end{tabular}
\end{table}

\begin{table}[h!]
    \centering\small
    \caption{Temporal convergence with $h= \tau^{1/2}$}\label{table:case2d-time}
    \vspace{.05in}
    \begin{tabular}{ccccc}
        \toprule
         $\tau$
         &  $\| E_\rho^{\tau,h}\|_{\ell^\infty(L^2)}$   &  convergence order 
         &  $\| E_u^{\tau,h}\|_{\ell^\infty(L^2)}$      &  convergence order \\
         \midrule
         1/16 &  1.67e-03 &    -  & 3.96e-03 &    -  \\
         1/36 &  7.38e-04 &  1.01 & 1.71e-03 &  1.04 \\
         1/64 &  4.15e-04 &  1.00 & 9.51e-04 &  1.02 \\
        1/100 &  2.65e-04 &  1.00 & 6.05e-04 &  1.01 \\
        \bottomrule
    \end{tabular}
\end{table}

In three dimensions, we consider the problem in a unit cube $\Omega = (0, 1) \times (0, 1) \times (0, 1)$ with the follow exact solution:
\begin{equation}
    \left\{
    \begin{aligned}
        & \rho = 2 + \frac{1}{3}\big(\sin(\pi x) + \sin(\pi y) + \sin(\pi z)\big)\sin(\pi t + \frac\pi2), \\
        & u = \big(\sin^2(\pi x)\sin(2\pi y)\sin(2\pi z), 
                   \sin(2\pi x)\sin^2(\pi y)\sin(2\pi z),
                  -2\sin(2\pi x)\sin(2\pi y)\sin^2(\pi z)
              \big), \\
        & p = t(x + y) + z - \frac{t + 1}{2}.
    \end{aligned}
    \right.
\end{equation}
The errors of the numerical solutions and the convergence orders in space and time are presented in Table \ref{table:case3d-space} and Table \ref{table:case3d-time}, respectively. 
Second-order convergence in space and first-order convergence in time are observed,
which are consistent with the theoretical result in Theorem \ref{thm_proj_error}. 

\begin{table}[h!]
    \centering\small
    \caption{Spatial convergence with $\tau= 1/2048$ }\label{table:case3d-space}
    \begin{tabular}{ccccc}
        \toprule
         $h$
         &  $\| E_\rho^{\tau,h}\|_{\ell^\infty(L^2)}$   &  convergence order 
         &  $\| E_u^{\tau,h}\|_{\ell^\infty(L^2)}$      &  convergence order \\
         \midrule
         1/10 &  4.95e-03 &    -  &  1.00e-01 &    -  \\
         1/12 &  3.51e-03 &  1.89 &  6.98e-02 &  1.99 \\
         1/14 &  2.59e-03 &  1.96 &  5.00e-02 &  2.17 \\
         1/16 &  1.99e-03 &  1.99 &  3.68e-02 &  2.29 \\
        \bottomrule
    \end{tabular}
\end{table}

\begin{table}[h!]
    \centering\small
    \caption{Temporal convergence with $h= \tau^{1/2}$ }\label{table:case3d-time}
    \begin{tabular}{ccccc}
        \toprule
         $\tau$
         &  $\| E_\rho^{\tau,h}\|_{\ell^\infty(L^2)}$   &  convergence order 
         &  $\| E_u^{\tau,h}\|_{\ell^\infty(L^2)}$      &  convergence order \\
         \midrule
        1/256 &  3.45e-03 &    -  &  3.74e-02 &    -  \\
        1/324 &  2.72e-03 &  1.01 &  2.83e-02 &  1.19 \\
        1/400 &  2.20e-03 &  1.01 &  2.19e-02 &  1.21 \\
        1/484 &  1.81e-03 &  1.01 &  1.74e-02 &  1.23 \\
        1/576 &  1.52e-03 &  1.02 &  1.40e-02 &  1.23 \\
        \bottomrule
    \end{tabular}
\end{table}

In the two examples above, the exact solutions are sufficiently smooth. 
Finally, we also consider an exact solution which is not sufficiently smooth, 
\begin{equation}
    \left\{
    \begin{aligned}
        & \rho = 2 + g(x, c)\cos(\sin t) + \big(g(y, c) + g(z, c)\big)\sin(\sin t),  \\
        & u = \big(\sin^2(\pi x)\sin(2\pi y)\sin(2\pi z), 
                   \sin(2\pi x)\sin^2(\pi y)\sin(2\pi z),
                  -2\sin(2\pi x)\sin(2\pi y)\sin^2(\pi z)
              \big), \\
        & p = t(x + y) + z - \frac{t + 1}{2},
    \end{aligned}
    \right.
\end{equation}
where
\begin{equation}
    g(x, c) =  \Big| x - \frac12 \Big|^c
\end{equation}
with $c = 1.51$ on unit cube $\Omega=(0, 1)\times(0, 1)\times(0, 1)$.
This exact solution satisfies $\rho\in H^{2+\alpha}(\Omega)$ for some $\alpha\in(0,0.01)$. 
The errors of the numerical solutions and the convergence orders in space and time are presented in Table~\ref{table:case3d2-space} and Table~\ref{table:case3d2-time}, respectively. 
Again, second-order convergence in space and first-order convergence in time are observed, 
which are consistent with the theoretical result in Theorem \ref{thm_proj_error}. 

\begin{table}[h!]
    \centering\small
    \caption{Spatial convergence with $\tau= {1/2048}$ }\label{table:case3d2-space}
    \begin{tabular}{ccccc}
        \toprule
         $h$
         &  $\| E_\rho^{\tau,h}\|_{\ell^\infty(L^2)}$   &  convergence order 
         &  $\| E_u^{\tau,h}\|_{\ell^\infty(L^2)}$      &  convergence order \\
         \midrule
        1/10 &  3.10e-03 &    -  &  8.99e-02 &    - \\
        1/12 &  2.17e-03 &  1.94 &  6.19e-02 &  2.04\\
        1/14 &  1.59e-03 &  2.04 &  4.40e-02 &  2.21\\
        1/16 &  1.20e-03 &  2.09 &  3.23e-02 &  2.32\\
    \bottomrule
\end{tabular}
\end{table}

\begin{table}[h!]
    \centering\small
    \caption{Temporal convergence with $h= \tau^{1/2}$ }\label{table:case3d2-time}
    \begin{tabular}{ccccc}
        \toprule
         $\tau$
         &  $\| E_\rho^{\tau,h}\|_{\ell^\infty(L^2)}$  &  convergence order 
         &  $\| E_u^{\tau,h}\|_{\ell^\infty(L^2)}$    &  convergence order \\
         \midrule
        1/576 &  4.95e-04 &    -  & 1.25e-02 &    -  \\
        1/676 &  4.18e-04 &  1.06 & 1.03e-02 &  1.19 \\
        1/784 &  3.57e-04 &  1.06 & 8.66e-03 &  1.18 \\
        1/900 &  3.09e-04 &  1.05 & 7.37e-03 &  1.17 \\
       1/1024 &  2.70e-04 &  1.05 & 6.36e-03 &  1.15 \\
    \bottomrule
\end{tabular}
\end{table}

\section{Conclusions}

We have present error analysis for a fully discrete, linearized semi-implicit and decoupled FEM for the coupled system \eqref{DNS_eqs} describing incompressible flow with variable density. Compared to the previous work in \cite{Cai-Li-Li-2020}, the error analysis in this paper is obtained in three dimensions under more realistic $H^{2+\alpha}$ regularity assumptions on the solution in a convex polyhedron.  
In the numerical method for the velocity equation (2.16b), we have added a stabilization term   
  $$
  \frac{1}{2} (  D_{\tau} \chi ( \rho_{h}^{n}) \,u_{h}^{n},  v_{h}) 
- \frac{1}{2}( \chi (\rho_{h}^{n})u_{h}^{n-1}, \nabla (u_{h}^{n} \cdot  v_{h})) ,
  $$
which helps to stabilize the velocity equation and therefore yields the energy inequality \eqref{energy-inequality} unconditionally, which holds also for small viscosity $\mu$. 
Since our error analysis strongly relies on the viscosity in the momentum equations, 
we have not considered the convection dominate case in this paper. 
But the energy inequality \eqref{energy-inequality} implies that the method at least maintains the energy stability of the numerical solution in the convection dominant case. 
The error analysis for system \eqref{DNS_eqs} in the convection dominant case is more challenging and remains open.

\section{Declarations}
\noindent {\bf Funding:}
The work of B. Li and Z. Yang was supported in part by National Natural Science Foundation of China (NSFC grant 12071020) 
and an internal grant of The Hong Kong Polytechnic University (Project 4-ZZKQ).
Weifeng Qiu is supported by a grant from the Research Grants Council of the Hong Kong Special Administrative Region, China 
(Project No. CityU 11302718).  \\

\noindent {\bf The Conflict of Interest Statement:}
No conflict of interest exists.\\

\noindent {\bf Availability of data and material:} 
Not applicable.\\

\noindent {\bf Code availability:}
Not applicable.\\

\noindent {\bf Authors' contributions:}
Buyang Li,  Weifeng Qiu and Zongze Yang have participated sufficiently in the work to take public
responsibility for the content, including participation in the concept, method,
analysis and writing. All authors certify that this material or similar material
has not been and will not be submitted to or published in any other publication.



\end{document}